\documentclass[10pt]{article}
\usepackage[colorlinks]{hyperref}
\AtBeginDocument{
	\hypersetup{
		colorlinks=true,
 		linkcolor=RoyalBlue,
  		citecolor=RedOrange,
	 }
}
\usepackage[usenames,dvipsnames]{xcolor}
\usepackage[top = 1in, bottom = 1in, left = .75in, right =.75in]{geometry}
\usepackage{amsthm,amsmath,amssymb}
\usepackage{graphicx}
\usepackage{verbatim}
\usepackage{enumerate}
\usepackage{bbm}
\usepackage{xcolor}

% declare theorem-like environments
\theoremstyle{plain}
\newtheorem{theorem}{Theorem}
\newtheorem{lemma}[theorem]{Lemma}
\newtheorem{corollary}[theorem]{Corollary}

\theoremstyle{definition}

\newtheorem{example}[theorem]{Example}
\newtheorem*{example*}{Example}

\theoremstyle{remark}
\newtheorem{remark}[theorem]{Remark}

\numberwithin{equation}{section}

% Frequently used short hands (standard)
\newcommand{\be}{\begin{equation}}
\newcommand{\ee}{\end{equation}}
\newcommand{\mean}{\mathbb{E}}
\newcommand{\pr}{\mathbb{P}}
\newcommand{\var}{\mathrm{var}}
\newcommand{\ind}[1]{\mathbbm{1}_{#1}}
\newcommand{\Bin}[2]{\operatorname{Bin}(#1,#2)}

\newcommand{\lp}{\left(}
\newcommand{\rp}{\right)}

\newcommand{\lf}{\lfloor}
\newcommand{\rf}{\rfloor}
\newcommand{\lc}{\lceil}
\newcommand{\rc}{\rceil}

\newcommand{\bs}{\boldsymbol}
\newcommand{\bw}{\boldsymbol w}
\newcommand{\bt}{\boldsymbol \tau}

\newcommand{\vecw}[2]{\ensuremath{w_{#1},\dots,w_{#2}}}

% Graph parameters used in the text
\newcommand{\sper}[1]{\mathcal{S}_{#1}} 			% 	space of permutations of [n]
\newcommand{\stp}[1]{\mathcal{C}_{#1}} 			% 	space of tree permutations
\newcommand{\spt}[1]{\mathcal{GC}_{#1}}			%	space of permutation trees
\newcommand{\hd}{H_{n}} 						%	highest degree
\newcommand{\leaves}[1]{L_{#1}} 					% 	number of leaves
\newcommand{\diam}{\mathrm{diam}(T_n)} 			%	diameter
\newcommand{\degree}{\operatorname{deg}}		%	degree of a vertex
\newcommand{\tail}[1]{LT_{#1}} 					%	longest run of tails
 					%	longest run of heads

%pawel's commands
\def\E{\Bbb E}
\def\P{\Bbb P}

\def\G{\Gamma}
\def\eps{\varepsilon}
\newcommand{\cov}{\mathrm{cov}}

\title{On random trees obtained from permutation graphs}
%\author{H\"{u}seyin Acan\\ School of Mathematical Sciences\\ Monash University\\ Melbourne, VIC 3800\\ Australia\\ 
\author{H\"{u}seyin Acan\\Department of Mathematics\\Rutgers University\\Piscataway, NJ 08854\\USA\\
\texttt{huseyin.acan@rutgers.edu}
\and
Pawe{\l} Hitczenko\\
%\thanks{This author was partially supported by a Simons Foundation grant \#208766.}\\
Department of Mathematics\\ Drexel University\\ Philadelphia, PA 19104 \\ USA\\ \texttt{phitczen@math.drexel.edu}
}
\date{}

\linespread{1.1}

\begin{document}

\maketitle

\begin{abstract}
A permutation $\boldsymbol w$ gives rise to a graph $G_{\boldsymbol w}$; the vertices of $G_{\boldsymbol w}$ are the letters in the permutation and the edges of $G_{\boldsymbol w}$ are the inversions of $\boldsymbol w$. We find that the number of trees among permutation graphs with $n$ vertices is $2^{n-2}$ for $n\ge 2$. We then study $T_n$, a uniformly random tree  from this set of trees. In particular, we study the number of vertices of a given degree in $T_n$, the maximum degree in $T_n$, the diameter of $T_n$, and the domination number of $T_n$. Denoting the number of degree-$k$ vertices in $T_n$ by $D_k$, we find that $(D_1,\dots,D_m)$ converges to a normal distribution for any fixed $m$ as $n\to \infty$. The vertex domination number of $T_n$ is also asymptotically normally distributed as $n\to \infty$. The diameter of $T_n$ shifted by $-2$ is binomially distributed with parameters $n-3$ and $1/2$. Finally, we find the asymptotic distribution of the maximum degree in $T_n$, which is concentrated around  $\log_2n$.

\bigskip\noindent \textbf{Keywords:} permutation graph; permutation tree; indecomposable permutation; diameter; maximum degree; domination number\\
\noindent \small Mathematics Subject Classifications: 05C80, 05C05, 60C05
\end{abstract}

\section{Introduction} \label{sec: Intro}

A permutation graph is an undirected graph obtained from a permutation by drawing an edge for each inversion in the permutation. For a permutation $\vecw{1}{n}$ of $n$ numbers, a pair $(w_a,w_b)$ is an \emph{inversion}, if $a<b$ and $w_a>w_b$.  Thus, formally, given a permutation $\bw=\vecw{1}{n}$ of $[n]:=\{1,\dots,n\}$, the permutation graph $G_{\bw}$ is defined to be the (undirected) graph with the vertex set $[n]$ and the edge set $\{(w_a,w_b): (w_a,w_b) \text{ is an inversion}\}$. It follows from the definition that the number of edges in $G_{\bw}$ is the same as the number of inversions of $\bw$. Since a permutation is uniquely determined by the set of its inversions, two different permutations yield two different graphs. Consequently, we have $n!$ permutation graphs on the vertex set $[n]$, as opposed to $2^{{n \choose 2}}$ general graphs. Note that our definition of a permutation graph was given by Even et al.~\cite{ELP} and it is different from the one given by Chartrand and Harary \cite{CH}.

Permutation graphs form a subclass of perfect graphs and hence various NP-complete problems in general graphs, including the coloring problem, the maximum clique problem, and the maximum independent set problem, have polynomial time solutions in permutation graphs. This aspect of permutation graphs has led to many studies that are computational in nature. Frequently, a problem on permutation graphs can be easily translated to a problem on permutations. For example, a clique in a permutation graph corresponds to a decreasing subsequence in the accompanying permutation and likewise an independent set corresponds to an increasing subsequence. Permutation graphs have been also used in the theory of limits of combinatorial structures to relate graphons (i.e. graph limits) with permutons (limits of permutations). We refer the reader to \cite{GGKK} for more information on these concepts and relation between them.

A variant of permutation graphs with an additional parameter $t$ was studied by Adin and Roichman~\cite{AR} and by Keevash et al.~\cite{KLS}. For $t=0$, Adin and Roichman, and for general $t$, Keevash et al. found the maximum number of edges in $\Gamma_{t,\boldsymbol \pi}$, where $\boldsymbol \pi$ is a permutation of $[n]$ and  $\Gamma_{t,\boldsymbol \pi}$ is the graph obtained from $\boldsymbol \pi$ in the following way. The vertex set of $\Gamma_{t,\boldsymbol \pi}$ is $[n]$, and two vertices $i<j$ are adjacent if $i$ and $j$ form an inversion and there are at most $t$ numbers $k$ with the property $i<k<j$ but $\boldsymbol \pi^{-1}(i)>\boldsymbol \pi^{-1}(k)>\boldsymbol \pi^{-1}(j)$. It follows that the number of edges in $\Gamma_{0,\boldsymbol \pi}$ is the number of permutations that are covered by $\boldsymbol \pi$ in the strong Bruhat order. 

A parallel notion for connected permutation graphs is indecomposable permutations. A permutation $\bw=\vecw{1}{n}$ of $[n]$ is called \emph{decomposable} at $m$ if $\{w_1,\dots,w_m\}= \{1,\dots,m\}$ for $m<n$. If there is no such $m$, then $\bw$ is called \emph{indecomposable}. Koh and Ree \cite{KR} showed that a permutation $\bw$ is indecomposable if and only if the graph $G_{\bw}$ is connected. This connection makes it possible to transfer various results and relations about indecomposable permutations to connected permutation graphs. For instance, the bijection between indecomposable permutations of $[n]$ and pointed hypermaps of size $n-1$, given by Mendez and Rosenstiehl ~\cite{MR}, implies a one--to--one correspondence between connected permutation graphs and pointed hypermaps of size $n-1$. For more information on indecomposable permutations, we refer the reader to B\'{o}na ~\cite{Bona04} or Flajolet and Sedgewick ~\cite{FS}. 

Indecomposable permutations were first studied by Lentin ~\cite{Lentin} and Comtet ~\cite{Comtet72,Comtet74}. 
Lentin ~\cite{Lentin} showed that $f(n)$, the number of indecomposable permutations (hence the number of connected permutation graphs) of length $n$, satisfies the recurrence relation 
\[	n!-f(n)=\sum_{i=1}^{n-1}(n-i)!f(i), \quad f(1):=1,			\]
and consequently, $f(n)$ is the coefficient of $t^n$ in the series $1- \big( \sum_{k\ge 0}k!t^k\big)^{-1}$. Comtet ~\cite{Comtet72} showed that a permutation chosen uniformly at random from $\sper{n}$, the set of permutations of $[n]$, is indecomposable with probability $1-2/n+O(n^{-2})$. Later, Cori et al.~\cite{CMR} considered the random permutation of $[n]$ with a given number $m$ of cycles and showed that the probability of indecomposability increases from 0 to 1 as $m$ decreases from $n$ to 1. In a more recent paper \cite{AP}, the first author and Pittel 
studied a uniformly random permutation of $[n]$ with a given number $m$ of inversions and found an evolution of this random permutation, where the evolution starts with the identity permutation (no inversions) and reaches the unique permutation with ${n \choose 2}$ inversions, each time gaining one
 inversion. In this evolution, they showed that the probability of indecomposability is monotone increasing in $m$, and they found a threshold value of $m$ around which the transition from `being decomposable' to `being indecomposable' occurs with high probability. Asymptotic sizes of the largest and the smallest components of the corresponding graph were also found when the number of inversions is slightly smaller than the threshold value.

The well-known Cayley's formula states that the number of trees on $n$ vertices is $n^{n-2}$. In this paper, we first find that the number of trees among permutation graphs  on $n$ vertices is $2^{n-2}$. Then we study the tree $T_n$ that is chosen uniformly at random from all these $2^{n-2}$ trees as $n$ tends to infinity.
In particular, we study the degree distribution of $T_n$, the maximum degree in $T_n$, the diameter of $T_n$, and the size of a minimum dominating set in $T_n$. We find that the number of leaves and the diameter are binomially distributed. Denoting by $D_i$ the number of degree-$i$ vertices in $T_n$, we prove that $(D_i)_{i=1}^{m}$ is asymptotically jointly normal for any $m$.
Furthermore, we find the asymptotic distribution of the maximum degree in $T_n$ as $n\to \infty$.
Finally, we show that the size of a minimum dominating set, $\gamma(T_n)$, is also asymptotically normally distributed with mean $n/3+O(1)$ and variance $0.26n+O(1)$. 

The maximum degree and the diameter of a random tree have been studied extensively for various classes of trees. For example, Moon~\cite{Moon} showed that $\Delta_n\log \log n/\log n$ approaches 1 in probability as $n$ tends to infinity, where $\Delta_n$ is the maximum degree of a tree chosen uniformly at random from all the trees on $n$ labeled vertices. For the same random tree, R\'{e}nyi and Szekeres~\cite{RS} showed that the diameter is of order $\Theta(\sqrt{n})$ with probability approaching 1.
Cooper and Zito~\cite{CZ} showed that the size of a minimum dominating set of a random recursive tree is $dn+o(n)$ with probability approaching 1 as $n\to \infty$, where $d\approx 0.3745$.

In Section~\ref{sec: trees} we find the number of trees and the number of forests with a  given number of trees.  In Section~\ref{sec: random trees}, we study the shape of $T_n$, the degree distribution of $T_n$, the maximum degree in $T_n$, the diameter of $T_n$, and the domination number of $T_n$.

\section{Number of Permutation Trees}\label{sec: trees}

Let $\sper{n}$ denote the set of permutations of the set $[n]$. 
In this section we count permutations $\bw \in \sper{n}$ such that $G_{\bw}$ is a tree or a forest. 
In this work, a \emph{permutation tree} refers to a tree that is also a permutation graph and a \emph{tree permutation} refers to a permutation whose graph is a tree. Recall that the number of edges in $G_{\bw}$ is the same as the number of inversions of $\bw$ and $G_{\bw}$ is connected if and only if $\bw$ is indecomposable. 

Let $n>1$ and let $\bw=w_1,\dots,w_n \in \sper{n}$ be a permutation such that $G_{\bw}$ is a tree. Let $m=m(\bw)=n-w_n$. We must have $m>0$ since otherwise $w_n=n$ is an isolated vertex in $G_{\bw}$. Moreover, we have the following simple observations.

\medskip
\noindent \textbf{(i)} The numbers $n-m+1,\dots,n$ appear in increasing order in $\bw$. Otherwise, we have $w_i>w_j>w_n=n-m$ for some $i<j<n$ and hence these three vertices form a triangle in $G_{\bw}$. (Similarly, the numbers $1,\dots,w_1-1$ must be in increasing order.)

\medskip
\noindent \textbf{(ii)} There is no number smaller than $n-m$ appearing after $n-m+2$ in $\bw$. Otherwise, using the previous observation, we have a 4-cycle on the vertices $n-m+1$, $n-m+2$, $j$, and $n-m$ for some $j<n-m$. 

From these observations we get
\begin{equation}\label{n-m}
(w_{n-m+1},\dots,w_{n}) = (n-m+2,\dots,n,n-m).
\end{equation}
Note that the vertices $n-m+2,\dots,n$ are leaves adjacent to $n-m$ in $G_{\bw}$. The only other vertex adjacent to $n-m$ is $n-m+1$, which lies in $\vecw{1}{n-m}$ and replacing $n-m+1$ with $n-m$ in $\vecw{1}{n-m}$, we obtain a tree permutation in $\sper{{n-m}}$. 

We now describe a two-case insertion algorithm to produce tree permutations recursively. For $n=1$, there is a unique permutation, which is a tree permutation. For $n=2$, the only tree permutation is $2,1$. Suppose $n\ge 2$ and let $\bw \in \sper{n}$ be a tree permutation. We produce two tree permutations by inserting the number $n+1$ to $\bw$ in two ways as follows.

\medskip
\noindent \textbf{Insertion algorithm.}
\begin{enumerate}
\item[$I_1$:]\label{op1}  Insert $n+1$ between $w_{n-1}$ and $w_n$ in $\bw$. In this case $n+1$ becomes a leaf adjacent to $w_n$. Denoting the new permutation by $\bw'$, we have
\[
\bw' = w_1,\dots,w_{n-1},n+1,w_n .
\]
This operation increases $m$ by 1, that is, $m(\bw')=m(\bw)+1$.
\item[$I_2$:]\label{op2} Substitute $n$ with $n+1$ in $\bw$ and put the number $n$ at 
 the end of the new permutation $\bw''$. Hence, if $n=w_j$ in $\bw$, then we have
\[
\bw'' = w_1,\dots,w_{j-1},n+1,w_{j+1},\dots,w_n, n .
\]
In this case we have $m(\bw'')=1$.
After this operation, $\{n,n+1\}$ becomes an edge in $G_{\bw''}$ and vertex $n$ loses its neighbors in $G_{\bw}$ to vertex $n+1$ in $G_{\bw''}$. All other adjacency relations are preserved.
\end{enumerate}

Clearly, for any given $\bw$, the pair $(\bw',\bw'')$ is obtained uniquely. It is also easy to see that given a tree permutation $\boldsymbol \tau$ in $\sper{n+1}$, there is a unique tree permutation $\bw$ in $\sper{n}$ such that $\boldsymbol \tau=\bw'$ or $\boldsymbol \tau=\bw''$. If $m(\boldsymbol \tau)>1$, then $\boldsymbol \tau=\bw'$, where $\bw$ is the permutation obtained from $\boldsymbol \tau$ simply by removing $n+1$  (reverse $I_1$). If $m(\boldsymbol \tau)=1$, then $\boldsymbol \tau=\bw''$, where $\bw$ is the permutation obtained from $\boldsymbol\tau$ by first substituting $n+1$ with $n$ and then removing the last element $\tau_{n+1}$ (reverse~$I_2$). Hence,  for $n\ge 3$, there is a bijection between tree permutations of length $n$ and $\{I_1,I_2\}^{n-2}$. Combining this with the fact that there is only one tree permutation of length 1 and one of length 2, we obtain the following result.

\begin{theorem}\label{cor: t_n=}
Let $t_n$ be the number of tree permutations of length $n$. We have $t_1=1$ and $t_n=2^{n-2}$ for $n\ge 2$.
\end{theorem}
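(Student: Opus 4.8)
The plan is to promote the insertion algorithm to an exact bijection and read the count off it. First I would dispose of the base cases by hand: for $n=1$ the unique permutation is a single isolated vertex, a trivial tree, so $t_1=1$; for $n=2$ the identity $1,2$ is edgeless while $2,1$ is a single edge, so the only tree permutation is $2,1$ and $t_2=1$. Since $2^{2-2}=1=t_2$, it then suffices to establish the recurrence $t_{n+1}=2t_n$ for every $n\ge 2$, from which $t_n=2^{n-2}$ for $n\ge 2$ follows by induction.

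To obtain the recurrence I would show that the assignment $\bw\mapsto(\bw',\bw'')$ sends each tree permutation of length $n$ to a pair of tree permutations of length $n+1$ in such a way that every length-$(n+1)$ tree permutation arises exactly once, either as some $\bw'$ or as some $\bw''$. The partition driving this is by the statistic $m$: every tree permutation $\bt\in\sper{n+1}$ has $m(\bt)\ge 1$, since a tree on at least two vertices has no isolated vertex, so length-$(n+1)$ tree permutations split into those with $m(\bt)>1$ and those with $m(\bt)=1$. The forward maps respect this split, as $I_1$ yields $m(\bw')=m(\bw)+1\ge 2$ and $I_2$ yields $m(\bw'')=1$. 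Verifying that each output really is a tree permutation is the essential bookkeeping: at the graph level $I_1$ merely attaches a new leaf $n+1$ to $w_n$, while $I_2$ relabels the vertex $n$ as $n+1$ and hangs a new leaf $n$ off it; in either case one adds a pendant vertex to a tree, so the result stays connected and acyclic.

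Injectivity and surjectivity then come from exhibiting inverse maps, and this is where equation~\eqref{n-m} carries the weight. Applied to $\bt$, it forces the tail to be $((n+1)-m+2,\dots,n+1,(n+1)-m)$, so $m(\bt)$ is readable from $\bt$ and pins down which insertion produced it: if $m(\bt)>1$ we invert $I_1$ by deleting the entry $n+1$, and if $m(\bt)=1$ we invert $I_2$ by replacing $n+1$ with $n$ and deleting the last entry. One checks these reverse operations return elements of $\sper{n}$ that are again tree permutations and are two-sided inverses of $I_1$ and $I_2$. I expect this case analysis resting on \eqref{n-m} to be the main obstacle; once it is settled, each block of the partition is in bijection with the $t_n$ tree permutations of length $n$, giving $t_{n+1}=t_n+t_n=2t_n$ for $n\ge 2$. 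Iterating from $t_2=1$ yields $t_n=2^{n-2}$, equivalently the bijection with $\{I_1,I_2\}^{n-2}$ mentioned above.
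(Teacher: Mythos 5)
Your proposal is correct and follows essentially the same route as the paper: the two-case insertion algorithm $I_1,I_2$, the partition of length-$(n+1)$ tree permutations by whether $m(\bt)>1$ or $m(\bt)=1$, and the inverse maps forced by equation~\eqref{n-m}, yielding the bijection with $\{I_1,I_2\}^{n-2}$ (equivalently your recurrence $t_{n+1}=2t_n$). The only cosmetic difference is that you phrase the count as a doubling recurrence from $t_2=1$ while the paper reads it directly off the bijection with binary sequences; these are the same argument.
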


\subsection{Shape of a tree permutation}\label{s:shape}

A \emph{caterpillar} is a tree such that all the nonleaves lie on a single path, which we call the \emph{central path}. We will show that if $\bw$ is a tree permutation, then $G_{\bw}$ is a caterpillar.

Let $n\ge 3$ and let $\vecw{1}{n}$ be a tree permutation. Note that $n$ and $w_n$ are adjacent in $G_{\bw}$. 
Also, it follows from~\eqref{n-m} that exactly one of the vertices $n$ and $w_n$ is a leaf. Similarly, $1$ and $w_1$ are adjacent and exactly one of them is a leaf. 

\begin{theorem}\label{caterpillar}
Let $n\ge 3$. Let $\boldsymbol \tau$ be a tree permutation of length $n$ and $G_{\boldsymbol \tau}$ be the corresponding tree. Then $G_{\boldsymbol \tau}$ is a caterpillar. 
If $\tau_1=n$ (respectively $\tau_n=1$), then the central path consists of the unique vertex $n$ (respectively $1$). If $\tau_1 \not =n$ and $\tau_n \not =1$, then
one endpoint of the central path lies in $\{1,\tau_1\}$ and the other one lies in $\{n,\tau_n\}$.
\end{theorem}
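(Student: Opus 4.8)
The plan is to prove the caterpillar property and to locate the endpoints of the central path \emph{simultaneously}, by induction on $n$ using the insertion algorithm $I_1,I_2$ together with the bijection established before Theorem~\ref{cor: t_n=}. The reformulation I will lean on is that the non-leaf vertices of any tree induce a subtree, so a tree is a caterpillar if and only if its non-leaf vertices induce a \emph{path}; equivalently, no vertex has three or more non-leaf neighbors. Since the inductive step must know \emph{where} the central path ends, I strengthen the statement before inducting.

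The strengthened hypothesis $P(n)$ (for $n\ge 3$) reads: for every tree permutation $\bw\in\sper{n}$, $G_{\bw}$ is a caterpillar, and the unique non-leaf vertex among $\{n,w_n\}$ (recall that, by the discussion after \eqref{n-m}, exactly one of $n,w_n$ is a leaf) is an endpoint of the central path; in the degenerate case where the path is a single vertex, that vertex is this non-leaf one. The base case $n=3$ is a direct check, since the two tree permutations $2,3,1$ and $3,1,2$ yield stars centered at $1$ and $3$. For the step I take $\bt\in\sper{n+1}$ and write $\bt=\bw'$ or $\bt=\bw''$ for a unique $\bw\in\sper{n}$. Here $I_1$ attaches a new leaf to the vertex $w_n$, while $I_2$ attaches a new leaf to the vertex $n$ (which is then relabeled $n+1$). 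In each case I split on whether the attachment vertex was already a non-leaf of $G_{\bw}$ or was a leaf: if it was already a non-leaf, the set of non-leaf vertices is unchanged and the central path is preserved with its tracked endpoint; if it was a leaf, it becomes a degree-two vertex and the path grows by exactly this one vertex.

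The main obstacle is precisely this second situation, where a former leaf becomes internal: I must guarantee the central path \emph{extends} rather than \emph{branches}. The newly internal vertex has exactly two neighbors, the new leaf and its single old neighbor, so the path extends cleanly if and only if that old neighbor is currently an endpoint of the central path. This is exactly what $P(n)$ carries: in the $I_1$ case with $m(\bw)=1$ the former leaf is $w_n$, whose only old neighbor is $n$, and in the $I_2$ case with $m(\bw)\ge 2$ the former leaf is $n$, whose only old neighbor is $w_n$; in both, the old neighbor is an endpoint by hypothesis. The hypothesis thus feeds itself, and after verifying that the new endpoint is again the non-leaf vertex among $\{n+1,w'_{n+1}\}$ or $\{n+1,w''_{n+1}\}$ (using $m(\bw')=m(\bw)+1$ and $m(\bw'')=1$), the induction closes.

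Finally I obtain the front endpoint by symmetry and assemble the statement. The reverse–complement map $\sigma(\bw)_i=n+1-w_{n+1-i}$ sends tree permutations to tree permutations and induces the graph isomorphism $v\mapsto n+1-v$ from $G_{\bw}$ to $G_{\sigma(\bw)}$; applying the proved back-endpoint claim to $\sigma(\bw)$ and translating back shows the non-leaf vertex among $\{1,w_1\}$ is also an endpoint. A short separate argument identifies the stars: $G_{\bw}$ has a single non-leaf vertex exactly when $w_1=n$ (then $\bw=n,1,\dots,n-1$, a star at $n$) or $w_n=1$ (a star at $1$), which are the cases $\tau_1=n$ and $\tau_n=1$, with central path $\{n\}$ or $\{1\}$. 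In the remaining case $\tau_1\ne n$ and $\tau_n\ne 1$ the central path has at least two vertices, hence exactly two distinct endpoints; since then $\{1,w_1\}\cap\{n,w_n\}=\emptyset$, the endpoint in $\{1,\tau_1\}$ and the endpoint in $\{n,\tau_n\}$ are distinct and are therefore precisely the two endpoints, as claimed.
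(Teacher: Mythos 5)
Your proof is correct; I checked the inductive step in all four subcases ($I_1$ or $I_2$, attachment vertex already a non-leaf or still a leaf), the reverse--complement symmetry, and the final assembly, and everything closes. It runs on the same engine as the paper's proof---induction on $n$ along the insertion bijection with $\{I_1,I_2\}$---but the organization is genuinely different. The paper carries the full two-endpoint statement as its induction hypothesis, splits the $I_1$ step into positional cases ($w_{n-1}=1$; $w_1=n-1$; the generic case), and then declares the $I_2$ step ``very similar'' and omits it. You instead (a) reformulate ``caterpillar'' as ``the non-leaves induce a path'' (legitimate, since deleting the leaves of a tree leaves a subtree) and (b) strengthen the hypothesis at one end only: the back endpoint is \emph{precisely} the non-leaf member of $\{n,w_n\}$. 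That sharper invariant is what makes both insertions collapse to a single dichotomy---attach a pendant vertex to $v$, where either $v$ is the non-leaf of the pair (non-leaf set and central path unchanged) or $v$ is the leaf of the pair, in which case its unique old neighbor is, by the invariant, exactly the current endpoint, so the path extends rather than branches. In particular you supply the $I_2$ details the paper skips, and your use of $m(\bw')=m(\bw)+1$ and $m(\bw'')=1$ correctly re-identifies the non-leaf of the new pair $\{n+1,w'_{n+1}\}$ or $\{n+1,w''_{n+1}\}$ after each step, so the invariant feeds itself. The front endpoint then comes for free from the involution $\sigma(\bw)_i=n+1-w_{n+1-i}$, which does induce the graph isomorphism $v\mapsto n+1-v$, halving the case analysis; the price is the two endgame checks you perform. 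Of these, the only-if direction of your star classification deserves its promised half-line: a star centered at $c\notin\{1,n\}$ would force $n$ before $c$ before $1$ in $\bt$, making $\{n,1\}$ an edge and the graph not a star---or, alternatively, it follows from your own disjointness observation, since a one-vertex central path would have to be the tracked non-leaf of both $\{1,\tau_1\}$ and $\{n,\tau_n\}$ simultaneously. An incidental benefit of your bookkeeping: in the paper's case (iii), where the new vertex $n$ is attached pendant to the leaf $w_{n-1}$, your invariant automatically names the correct new endpoint $w_{n-1}=\tau_n$ (the vertex $n$ itself is a leaf of $G_{\bt}$ there, even though the paper's text calls $n$ the new end vertex); the theorem is unaffected either way since $\tau_n\in\{n,\tau_n\}$, but your version is the cleaner statement to induct on.
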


\begin{proof}
The proof is by induction on $n$.
It is easy to verify that the claim holds for $n=3$. Now suppose $n>3$ and let $\bt= \tau_1,\dots,\tau_n$ be a tree permutation of length $n$. Let $\bw$ be the unique tree permutation in $\sper{n-1}$ that produces $\boldsymbol \tau$ with one of the operations in the insertion algorithm. By induction hypothesis, $G_{\bw}$ is a caterpillar with the properties given in the statement of the theorem. Let $P_{\bw}$ denote the central path in $G_{\bw}$. 

First suppose that $\boldsymbol \tau$ is produced from $\bw$ with the first operation. Hence, the graph $G_{\bt}$ is obtained from $G_{\bw}$ by adding the vertex $n$ and the edge $\{n,w_{n-1}\}$. We have three cases.
\begin{enumerate}[(i)] \itemsep=0pt \parsep=0pt \partopsep=0pt \topsep=0pt
\item $w_{n-1}=1$. In this case $G_{\bw}$ is a star with the central vertex $1$ and so is $G_{\bt}$.
\item $w_1=n-1$.  In this case, $\bw=n-1,1,\dots,n-2$ and $\bt= n-1,1,\dots,n-3,n,n-2$. Consequently, the only nonleaves in $G_{\tau}$ are $\tau_1=n-1$ and $\tau_{n}=n-2$.
\item $w_{n-1}\not =1$ and $w_{1}\not =n-1$. Here $P_{\bw}$ has at least two vertices and it starts with one of the vertices $1$ or $w_1$.
If $w_{n-1}$ is an end vertex of $P_{\bw}$, then adding the vertex $n$ and the edge $\{n,w_{n-1}\}$ we get another caterpillar with the same central path $P_{\bw}$. If $w_{n-1}$ is not an end vertex of $P_{\bw}$, then by the induction hypothesis, $n-1$ is an end vertex of $P_{\bw}$ and $w_{n-1}$ is a leaf adjacent to $n-1$. Adding the vertex $n$ and the edge $\{n,w_{n-1}\}$ to $G_{\bw}$ we obtain a caterpillar with the central path $P_{\bw}\cup \{n-1,n\}$, so that $n$ becomes an end vertex in the new central path.
\end{enumerate}

The analysis when $\bt$ is produced from $\bw$ with the second operation is very similar and we skip the details.
\end{proof}

\subsection{Forest Permutations}\label{s:forest}

Now we turn our attention to \emph{forest permutations}, which are the permutations whose graphs are acyclic.
Only induced cycles in permutation graphs are triangles and cycles of length 4. These two cycles correspond to patterns of $321$ and $3412$, respectively.
In other words, forest permutations correspond to those permutations avoiding  the patterns  $321$ and $3412$. The sequence enumerating such permutations (along with several other interpretations of these numbers) are given in \cite[Sequence A001519]{sloane}. Namely, denoting the number of length-$n$ permutations avoiding the patterns $321$ and $3412$ by $f_n$, we have the following recurrence relation:
\[
f_n=3f_{n-1}-f_{n-2}; \quad f_1=1, \quad f_2=2.
\]
Solving this recurrence relation we find the number of forest permutations
\[
f_n=\frac{\sqrt 5-1}{2\sqrt5}\left( \frac{3+\sqrt5}{2}\right)^n  +  \frac{\sqrt 5+1}{2\sqrt5}\left( \frac{3-\sqrt5}{2}\right)^n.
\]
Here we find $f(n,m)$, the number of forest permutations with $m$ trees and a total of $n$ vertices.

Let $T(y)$ be the generating function of tree permutations. By Theorem~\ref{cor: t_n=} we have
\[
T(y)=y+\sum_{n\ge 2}2^{n-2}y^n = y+\frac{y^2}{1-2y}.
\]
Note that $f(n,n)=1$ since the identity permutation is the only inversion-free permutation. Let $\bw$ be a permutation and $i<j<k$.
If $w_i>w_k$, then either $w_i>w_j$ or $w_j>w_k$. In other words, if $w_i$ and $w_k$ are neighbors in $G_{\bw}$, then $w_j$ has at least one neighbor in $\{w_i,w_k\}$. It follows from this observation that if $C$ is a connected component of $G_{\bw}$, the vertex set of $C$ is $\{w_a,w_{a+1},\dots,w_b \}$ for some integers $a$ and $b$. 
In other words, a connected component of $G_{\bw}$ consists of consecutive terms in the permutation~$\bw$. Thus, we have
\begin{align*}
f(n,m) &= [y^n]T(y)^m = [y^{n-m}] \left( 1+y(1-2y)^{-1}\right)^m 	\\
&= [y^{n-m}] \sum_{k\ge 1} {m \choose k}y^k(1-2y)^{-k} 		\\
&= \sum_{k\ge 1} {m \choose k} [y^{n-m-k}](1-2y)^{-k}  	\\
&= \sum_{k= 1}^{\min\{m,n-m\}} {m \choose k} {n-m-1 \choose k-1}2^{n-m-k}.
\end{align*}
Hence we have the following theorem.

\begin{theorem}
The number of permutation forests with $n$ vertices and $m$ trees is given by the formula
\[
f(n,m)=
\begin{cases}
1 & \text{ if } n=m,		 \\
\sum_{k= 1}^{\min\{m,n-m\}} {m \choose k} {n-m-1 \choose k-1}2^{n-m-k} & \text{ if } m<n.
\end{cases}
\] 
\end{theorem}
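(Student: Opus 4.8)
The plan is to realize $f(n,m)$ as a coefficient extraction from the $m$-th power of the tree-permutation generating function. The conceptual core is a unique decomposition of an arbitrary forest permutation into a sequence of tree permutations; after that, the count is a routine generating-function computation, which I will carry out exactly as the preceding discussion suggests.

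First I would establish the structural claim that underlies everything: in any permutation $\bw$, each connected component of $G_{\bw}$ occupies a block of \emph{consecutive positions}. This follows from the observation already recorded in the text --- if $w_i$ and $w_k$ form an inversion with $i<j<k$, then $w_j$ inverts with at least one of $w_i,w_k$, so every intermediate position lies in the same component and no component can skip over a position. Consequently the components are consecutive blocks of positions $[1,b_1],[b_1{+}1,b_2],\dots$, and since distinct components share no edge, there is no inversion between two different blocks; hence every value in an earlier block is smaller than every value in a later block, so the value set of each block is itself a consecutive interval of integers. This says precisely that the connected components of $G_{\bw}$ are the indecomposable blocks of $\bw$ in the sense of Section~\ref{sec: Intro}. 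Because a tree is connected and every component of a forest is a tree, a forest permutation with $m$ components is exactly a concatenation of $m$ tree permutations --- each read as a pattern on its own interval of values --- with the block sizes summing to $n$.

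This decomposition is an ordered-sequence construction, so it translates directly into generating functions: with $T(y)=\sum_{n\ge 1} t_n y^n$, the sequence-of-$m$-blocks description yields $f(n,m)=[y^n]\,T(y)^m$. Theorem~\ref{cor: t_n=} gives $T(y)=y+y^2(1-2y)^{-1}$, and writing $T(y)=y\bigl(1+y(1-2y)^{-1}\bigr)$ lets me pull out the factor $y^m$, reducing the task to $[y^{n-m}]\bigl(1+y(1-2y)^{-1}\bigr)^m$. Expanding by the binomial theorem and then expanding each $(1-2y)^{-k}$ by the negative binomial series produces a double sum; extracting the coefficient of $y^{n-m}$ collapses it to $\sum_k \binom{m}{k}\binom{n-m-1}{k-1}2^{n-m-k}$, with the range forced to $1\le k\le \min\{m,n-m\}$ by the two binomial coefficients. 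The degenerate case $n=m$ is exactly the $k=0$ term, corresponding to the inversion-free identity permutation, and gives $f(n,n)=1$.

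The only genuinely non-routine step is the structural decomposition; everything after it is bookkeeping. I would therefore spend most of the care on verifying that each component is an interval in \emph{both} position and value, since it is the consecutiveness of the value set (not merely of the positions) that guarantees each block is a bona fide tree permutation in its own right rather than just an induced subgraph --- and hence that the count is really governed by $T(y)^m$ with $T$ as given, making the generating-function computation legitimate.
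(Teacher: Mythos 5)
Your proposal is correct and takes essentially the same route as the paper: the same observation that for $i<j<k$ an inversion $(w_i,w_k)$ forces $w_j$ to invert with one of $w_i,w_k$, hence each component of $G_{\bw}$ occupies consecutive positions, giving $f(n,m)=[y^n]T(y)^m$ with $T(y)=y+y^2(1-2y)^{-1}$ from Theorem~\ref{cor: t_n=}, followed by the identical binomial and negative-binomial coefficient extraction. Your explicit check that each component is an interval in \emph{value} as well as position (so each block is a genuine tree permutation as a pattern) is a detail the paper leaves implicit, but it is a clarification of the same argument rather than a different approach.
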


\section{The permutation graph of a random tree permutation}\label{sec: random trees}

Let $\stp{n}$ be the subset of $\sper{n}$ consisting of tree permutations and let $\spt{n}$ denote the set of trees corresponding to these tree permutations, i.e.\ 
\[
\spt{n}= \{G_{\bw}: \bw \in \stp{n} \}.
\]
In Section~\ref{sec: trees}, we proved $|\stp{n}|=|\spt{n}|=2^{n-2}$ for $n\ge 2$.  
In this section we turn $\spt{n}$ into a probability space by equipping it with the uniform probability measure. 
We denote by $T_n$ a random element of $\spt{n}$ and study various graph properties of $T_n$.
In this section we denote by $\bt=\tau_1\dots\tau_n$ the random permutation corresponding to $T_n$, i.e.\ $G_{\bt}=T_n$.

\subsection{The number of leaves}\label{s:leaves}

Let $\leaves{n}$ denote the number of leaves in $T_n$. Here we find the distribution of $L_n$.

\begin{lemma}\label{n or sigman}
Let $n\ge 3$. The vertex $\tau_{n}$ is a leaf in $T_n$ with probability $1/2$. Consequently, the vertex $n$ is a leaf with probability $1/2$.
\end{lemma}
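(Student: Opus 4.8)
The plan is to reduce the event ``$\tau_n$ is a leaf'' to a statement about the single parameter $m=m(\bt)=n-\tau_n$, and then to the very last step of the insertion algorithm. First I would read off the degree of $\tau_n$ directly from~\eqref{n-m}: since $\tau_n=\tau_n=n-m$ and its neighbours are exactly $n-m+1$ together with the leaves $n-m+2,\dots,n$, the vertex $\tau_n$ has degree $m$. Hence $\tau_n$ is a leaf if and only if $m=1$.

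Next I would track how $m$ evolves under the insertion algorithm. Operation $I_1$ satisfies $m(\bw')=m(\bw)+1$, and since every tree permutation of length at least $2$ has $m\ge 1$, an $I_1$ step applied to a permutation of length $n-1\ge 2$ always leaves $m\ge 2$. Operation $I_2$ satisfies $m(\bw'')=1$. Consequently $m(\bt)=1$ if and only if the final operation that produces $\bt$ from a tree permutation of length $n-1$ is $I_2$.

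Now I would invoke the bijection established just before Theorem~\ref{cor: t_n=}, which identifies tree permutations of length $n$ with $\{I_1,I_2\}^{n-2}$ for $n\ge 3$. Under the uniform measure on $\spt{n}$ this makes the $n-2$ operations an i.i.d.\ sequence of fair coin flips; in particular the last coordinate equals $I_2$ with probability $1/2$, regardless of the earlier ones. Combining this with the two previous paragraphs gives
\[
\pr(\tau_n \text{ is a leaf})=\pr\big(m(\bt)=1\big)=\pr(\text{last operation is } I_2)=\tfrac12 .
\]

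For the ``consequently'' clause I would use the observation recorded in Section~\ref{s:shape} that exactly one of the two adjacent vertices $n$ and $\tau_n$ is a leaf. Thus $n$ is a leaf precisely when $\tau_n$ is not, so $\pr(n\text{ is a leaf})=1-\pr(\tau_n\text{ is a leaf})=1/2$. The only real content here is the first paragraph's identity $\degree \tau_n=m$ and the observation that $m$ is decided solely by the last insertion; once these are in place the probability computation is immediate, and I do not expect a genuine obstacle beyond carefully noting that the hypothesis $n\ge 3$ is exactly what makes the last coordinate of the $\{I_1,I_2\}$-sequence exist and be uniform.
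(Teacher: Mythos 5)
Your proof is correct and follows essentially the same route as the paper: both arguments reduce the event $\{\tau_n \text{ is a leaf}\}$ to the last insertion being $I_2$, use the uniform bijection with $\{I_1,I_2\}^{n-2}$ to get probability $1/2$, and invoke the Section~\ref{s:shape} observation that exactly one of $\{n,\tau_n\}$ is a leaf for the second claim. Your only addition is to make explicit, via the identity $\degree(\tau_n)=m$ from~\eqref{n-m} and the evolution $m(\bw')=m(\bw)+1$, $m(\bw'')=1$, the equivalence that the paper asserts directly.
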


\begin{proof}
Recall from Section~\ref{s:shape} that exactly one of $\{n,\tau_n\}$ is a leaf in $T_n$. Further, the vertex $\tau_n$ is a leaf  in $G_{\bt}$ if and only if $\bt$ is produced from a tree permutation in $\stp{n-1}$ with the second operation in the insertion algorithm. Since $\bt$ is generated by a uniformly random sequence in $\{I_1,I_2\}^{n-2}$, in probabilistic language it means that the last insertion is performed with the second operation with probability $1/2$, which proves the lemma.
\end{proof}

\begin{lemma}\label{leaves}
Let $n\ge 3$. The number of leaves $L_n$ in $T_n$ is distributed as $2+\Bin{n-3}{1/2}$.
\end{lemma}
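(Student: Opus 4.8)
The plan is to track how the number of leaves changes under a single step of the insertion algorithm and to show that each step adds an independent fair coin flip to the leaf count. Starting from length $3$, where one checks directly that $\leaves{3}=2$ for both tree permutations, this yields $\leaves{n}=2+\Bin{n-3}{1/2}$ after the $n-3$ insertions that build a length-$n$ tree permutation from a length-$3$ one.

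First I would compute the effect of each operation on the leaf count. Suppose $\bw \in \stp{k}$ and we pass to a tree permutation of length $k+1$. Under $I_1$ the new vertex $k+1$ is a leaf adjacent to $w_k$, and the only other change is that $w_k$ gains one neighbor, all remaining adjacencies being preserved; hence the leaf count increases by $1$ for the new vertex and decreases by $1$ exactly when $w_k$ was already a leaf, so the increment equals $1-\ind{\{w_k \text{ is a leaf}\}}$. By the dichotomy of Section~\ref{s:shape} (exactly one of $k$ and $w_k$ is a leaf) this equals $\ind{\{k \text{ is a leaf}\}}$. Under $I_2$ the vertex $k+1$ inherits all neighbors of $k$ together with the new edge to $k$, so $k+1$ has degree at least two and is not a leaf, while $k$ becomes a leaf and all other adjacencies are preserved; the increment is therefore $1-\ind{\{k \text{ is a leaf}\}}=\ind{\{w_k \text{ is a leaf}\}}$.

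The key observation is then that, whatever the current state (whichever of $k$ and $w_k$ is the leaf), exactly one of the two operations increases the leaf count by $1$ and the other leaves it unchanged, since the two candidate increments $\ind{\{k \text{ is a leaf}\}}$ and $\ind{\{w_k \text{ is a leaf}\}}$ sum to $1$. Because the building sequence is uniform in $\{I_1,I_2\}^{n-2}$, the operation at each step is a fair coin independent of the past, while the state is determined by the earlier operations. Formally I would let $\mathcal F_{k-1}$ be the $\sigma$-field generated by the first $k-1$ operations, observe that the leaf increment $\Delta L_k$ is a function of the $\mathcal F_{k-1}$-measurable state and the independent fair operation, and conclude that $\pr(\Delta L_k = 1 \mid \mathcal F_{k-1})=1/2$ almost surely. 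This makes the successive increments i.i.d.\ $\mathrm{Bernoulli}(1/2)$, so their sum over the $n-3$ steps is $\Bin{n-3}{1/2}$, and adding the base value $\leaves{3}=2$ finishes the proof.

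The main obstacle is the bookkeeping of the second paragraph: one must verify carefully that under each operation no vertex other than the two named ones changes its leaf/non-leaf status, so that the net increment really is the clean indicator computed above. Once that is settled, the remainder is the standard conditioning argument showing that state-dependent increments become unconditionally i.i.d.\ precisely because the driving coin is independent of the state.
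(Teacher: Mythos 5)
Your proposal is correct and follows essentially the same route as the paper: it tracks the leaf-count increment under $I_1$ and $I_2$, uses the dichotomy that exactly one of $n$ and $\tau_n$ is a leaf, and exploits the uniformity of the sequence in $\{I_1,I_2\}^{n-2}$ (the content of Lemma~\ref{n or sigman}) to get independent fair-coin increments. The only difference is presentational: where the paper says the result ``follows easily by induction,'' you spell out the conditioning argument via the filtration $\mathcal F_{k-1}$, which is a fine and rigorous way to fill in that step.
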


\begin{proof}
Consider the insertion algorithm applied to $\bw=\vecw{1}{n}$. When the first operation is applied, the vertex $n+1$ becomes a leaf, the  degree of $w_n$ increases by 1, and the other degrees do not change. Hence, if $w_n$ is a leaf in $G_{\bw}$, then the number of leaves stays the same after the operation, and if $w_{n}$ is not a leaf, then the number of leaves increases by 1.

Similarly, when the second operation is applied, the vertex $n$ becomes a leaf in $G_{\bw''}$, the degree of $n+1$ in $G_{\bw''}$ becomes one more than the degree of $n$ in $G_{\bw}$, and all other degrees stay the same. Hence, if $n$ is a leaf in $G_{\bw}$, then the number of leaves stays the same after the second operation, and if $n$ is not a leaf, then the number of leaves increases by 1.

The lemma now follows easily from Lemma~\ref{n or sigman} and induction on $n$.
\end{proof}

\subsection{The diameter}

The diameter of a tree $T$ is the length (number of edges) of the longest path in $T$. In a caterpillar, a longest path starts and ends with leaves and contains all the vertices in the central path. Thus, the diameter of a tree in $\spt{n}$ is two more than the length of the central path. Consequently, denoting the diameter of $T_n$ by $\diam$, 
we have
\begin{equation}\label{diam+leaves}
\diam=n-\leaves{n}+1. 
\end{equation}
Combining~\eqref{diam+leaves} with Lemma~\ref{leaves} gives the next result.

\begin{lemma}
For $n\ge 3$, $\diam$  is distributed as $2+\Bin{n-3}{1/2}$. 
\end{lemma}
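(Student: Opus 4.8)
The plan is to derive the distribution of $\diam$ directly from equation~\eqref{diam+leaves} together with Lemma~\ref{leaves}, which already gives the distribution of $\leaves{n}$. The identity $\diam = n - \leaves{n} + 1$ has been established just before the statement by observing that in a caterpillar a longest path passes through every vertex on the central path and is capped by two leaves, so its length (in edges) is two more than the number of edges of the central path, i.e.\ two more than the central path length. Since the central path together with its two terminal leaves accounts for all vertices on a longest path, and the remaining $\leaves{n}$ vertices are the leaves hanging off (with exactly two of them serving as the endpoints of the diameter-realizing path), one gets that the length of the central path is $n - \leaves{n} - 1$, whence $\diam = n - \leaves{n} + 1$.

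The substantive input is therefore Lemma~\ref{leaves}, which I would invoke verbatim: for $n \ge 3$ the number of leaves satisfies $\leaves{n} \stackrel{d}{=} 2 + \Bin{n-3}{1/2}$. The only remaining work is to propagate this through the affine relation. First I would write $\leaves{n} = 2 + B$ where $B \sim \Bin{n-3}{1/2}$, so that from~\eqref{diam+leaves} we obtain
\[
\diam = n - (2+B) + 1 = (n-1) - B.
\]
The key arithmetic fact I would then use is the symmetry of the symmetric binomial distribution: if $B \sim \Bin{n-3}{1/2}$ then $(n-3) - B \sim \Bin{n-3}{1/2}$ as well, because flipping each of the $n-3$ fair coins exchanges successes with failures and leaves the distribution invariant. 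Substituting $B = (n-3) - B'$ with $B' \sim \Bin{n-3}{1/2}$ gives
\[
\diam = (n-1) - \big((n-3) - B'\big) = 2 + B',
\]
which is exactly $2 + \Bin{n-3}{1/2}$, the claimed distribution.

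There is essentially no hard step here; the result is a one-line consequence of Lemma~\ref{leaves} and the self-duality of the fair-coin binomial. The only point one must be careful about is the direction of the shift: the naive substitution produces $\diam = (n-1) - B$, which at first glance looks like a reflected binomial rather than $2 + \Bin{n-3}{1/2}$, and it is precisely the $p = 1/2$ symmetry that reconciles the two. I would make this reflection explicit rather than leaving it to the reader, since it is the one place where the value $p=1/2$ (as opposed to a general success probability) is genuinely used. No induction or generating-function argument is needed, as all of that was already carried out in the proof of Lemma~\ref{leaves}.
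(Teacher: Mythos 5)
Your proof is correct and follows the same route as the paper: combine the identity \eqref{diam+leaves} with Lemma~\ref{leaves}, using the $p=1/2$ symmetry of the binomial to turn $(n-1)-\Bin{n-3}{1/2}$ into $2+\Bin{n-3}{1/2}$. The paper states this combination in one line and leaves the reflection step implicit; your making it explicit is a harmless (arguably helpful) elaboration, not a different argument.
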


\subsection{The highest degree and the number of vertices of a given degree in $T_n$}

In this section we study $D_k=D_k(T_n)$, the number of vertices of degree $k$ in $T_n$. Note that $D_1$ denotes the number of leaves, which was separately studied in Section~\ref{s:leaves}. We start with some observations on $G_{\bw}$ for a tree permutation~$\bw$.

Let $\bw=w_1,\dots,w_n$ be a tree permutation. We say that $w_k$ is a left-to-right maximum if there is no $i \in [k-1]$ such that $w_i>w_k$.
Let $W_1$ be the set of left-to-right maxima and $W_0$ be the rest of the numbers  in $\bw$. 

\begin{lemma}\label{lem:W0W1}
The graph $G_{\bw}$ has the bipartition $(W_0,W_1)$.
\end{lemma}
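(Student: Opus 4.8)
The plan is to prove the assertion directly by showing that every edge of $G_{\bw}$ has exactly one endpoint in $W_1$, so that the split of the vertices into left-to-right maxima ($W_1$) and the remaining values ($W_0$) is a proper $2$-coloring, which is precisely what it means for $(W_0,W_1)$ to be a bipartition. Since $W_0$ and $W_1$ partition the vertex set by definition, the only thing to establish is the edge-crossing property. Recall that $w_a$ and $w_b$ are adjacent in $G_{\bw}$ exactly when they form an inversion, so I would fix an arbitrary edge and name its endpoints so that $a<b$ and $w_a>w_b$, then analyze which of $w_a,w_b$ is a left-to-right maximum.

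First I would dispose of the smaller endpoint: if $(w_a,w_b)$ is an inversion with $a<b$ and $w_a>w_b$, then $w_b$ is preceded by the strictly larger value $w_a$, so $w_b$ is not a left-to-right maximum and hence $w_b\in W_0$. This step uses nothing beyond the definition of a left-to-right maximum and of an inversion.

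The key step is to show that the larger endpoint $w_a$ must lie in $W_1$, and this is where the tree hypothesis enters. Suppose toward a contradiction that $w_a\in W_0$; then there is some index $i<a$ with $w_i>w_a$. Together with $w_a>w_b$ and $i<a<b$, the three values satisfy $w_i>w_a>w_b$ while their positions increase, so $(w_i,w_a)$, $(w_i,w_b)$, and $(w_a,w_b)$ are all inversions and $\{w_i,w_a,w_b\}$ spans a triangle in $G_{\bw}$. Because $G_{\bw}$ is a tree it is acyclic, which is the desired contradiction; hence $w_a\in W_1$. Combining the two steps, every edge of $G_{\bw}$ joins a vertex of $W_0$ to a vertex of $W_1$, giving the bipartition. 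The only genuine obstacle is the triangle argument, and it is mild: it is exactly the triangle-avoidance (no $321$ pattern) observation already used in Section~\ref{sec: trees}, here applied to a single edge rather than to the final block of the permutation.
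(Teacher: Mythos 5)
Your proof is correct and is essentially the paper's argument in contrapositive form: the paper shows directly that $W_1$ and $W_0$ are each independent sets (using the definition of left-to-right maxima for $W_1$ and the triangle $w_k>w_i>w_j$ for $W_0$), while you show edge-by-edge that the later endpoint of an inversion lies in $W_0$ and the earlier one in $W_1$, invoking the identical triangle-avoidance argument from acyclicity. The two formulations are logically equivalent, so no further comparison is needed.
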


\begin{proof}
Clearly, two elements $w_i$ and $w_j$ of $W_1$ cannot be neighbors since both of them are left-to-right maxima. Now let $i<j$ and suppose $w_i,w_j \in W_0$. 
Since $w_i \in W_0$, there is some $k$ such that $k<i$ and $w_k>w_i$. Now, we must have $w_i<w_j$ since otherwise $w_k,w_i,w_j$ would form a triangle. Hence $w_iw_j$ is not an edge, which finishes the proof.
%In this case, we must have $w_i<w_j$ since otherwise there would be some $k$ such that $k<i$ and $w_k>w_i$, in which case $w_k,w_i,w_j$ would be a triangle. So $w_i$ and $w_j$ are not neighbors, which finishes the proof.
\end{proof}

This lemma says that elements of $W_0$ as well as elements of $W_1$ appear in increasing order in $\bw$. 
This fact implies that (i) if $w_k \in W_1$, then the neighbors of $w_k$ lie in the set $\{w_i \in W_0: i>k\}$ and (ii) if $w_k\in W_0$, then the neighbors of $w_k$ lie in the set $\{w_i \in W_1: i<k\}$.
For $k\in [n]$, let $\degree(w_k)$ and $N(w_k)$ denote the degree of $w_k$ and the set of neighbors of $w_k$ in $G_{\bw}$. 
To formulate our result about $N(w_k)$, we define the block decomposition $B_1,\dots,B_{2\ell}$ of $\bw$ as follows: 
\begin{enumerate}[(i)] \topsep=0pt \parsep=0pt \partopsep=0pt \itemsep=0pt
\item each $B_j$ consists of vertices with consecutive indices,
\item the indices of the vertices in $B_j$ are smaller than those of $B_{j+1}$, 
\item $B_{2j-1} \subseteq W_1$ and $B_{2j}\subseteq W_0$ for any integer $j\in [\ell]$.
\end{enumerate}
Note that there must be an even number of blocks since $w_1 \in W_1$ and $w_n \in W_0$. Let $b_j=|B_j|$. For any $j$,  we denote the smallest and largest elements of $B_j$ by $f_j$ and $l_j$, respectively. These are the elements with the smallest and largest indices in $B_j$, respectively, as well.

\begin{lemma}\label{lem:obs}
For $k\in [n]$, the following hold for $\degree(w_k)$ and $N(w_k)$.
\begin{enumerate}[\textup(a\textup)] \topsep=0pt \parsep=0pt \partopsep=0pt \itemsep=0pt
\item If $w_k \in B_{2i-1}$, and $w_k\not = l_{2i-1}$, then $\degree(w_k)=1$ and $N(w_k)= \{f_{2i}\}$.  
\item If $w_k \in B_{2i-1}$, $w_k = l_{2i-1}$, and $B_{2i}$ is not the last block, then $\degree(w_k)=b_{2i}+1$ and $N(w_k)= B_{2i} \cup \{f_{2i+2}\}$. 
\item If $w_k \in B_{2i-1}$, $w_k = l_{2i-1}$, and $B_{2i}$ is the last block, then $\degree(w_k)=b_{2i}$ and $N(w_k)= B_{2i}$. 
\item If $w_k \in B_{2i}$ and $w_k\not = f_{2i}$, then $\degree(w_k)=1$ and $N(w_k)= \{l_{2i-1}\}$.  
\item If $w_k \in B_{2i}$ for some $i\ge 2$ and $w_k = f_{2i}$, then $\degree(w_k)=b_{2i-1}+1$ and $N(w_k)= B_{2i-1}\cup \{l_{2i-3}\}$. 
\item If $w_k \in B_{2}$ and $w_k = f_{2}$, then $\degree(w_k)=b_1$ and $N(w_k)= B_{1}$.   
\end{enumerate}
\end{lemma}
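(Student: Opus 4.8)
The plan is to reduce every statement to a single adjacency criterion and then read off the six cases from the relative order of the block extrema $f_j,l_j$. First I would record the criterion: if $u\in W_1$ sits at position $p$ and $v\in W_0$ sits at position $q$, then $u$ and $v$ are adjacent if and only if $p<q$ and $u>v$. Indeed, an edge is an inversion, so for adjacent $u,v$ the earlier one must be larger; and if $q<p$ then $v<u$, because $u$, being a left-to-right maximum, exceeds everything before it, so no inversion occurs. Consequently, for $w_k\in B_{2i-1}$ the neighbourhood is $N(w_k)=\{v\in W_0: v<w_k \text{ and } v \text{ lies after } w_k\}$, and since every $W_0$ vertex after $B_{2i-1}$ lies in one of the blocks $B_{2i},B_{2i+2},\dots,B_{2\ell}$ (all of which start after $B_{2i-1}$ ends), the position condition is automatic there. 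Thus $N(w_k)=\{v\in B_{2i}\cup B_{2i+2}\cup\dots\cup B_{2\ell}: v<w_k\}$, and everything reduces to value comparisons. (The analogous statement for $w_k\in B_{2i}$ reads $N(w_k)=\{u\in B_1\cup B_3\cup\dots\cup B_{2i-1}: u>w_k\}$, the position condition being automatic for the same reason.)

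The heart of the argument is a ``staircase'' describing how consecutive blocks interleave in value. I would establish, for each $i$, the following facts, using only three inputs: monotonicity of $W_0$ and $W_1$ (Lemma~\ref{lem:W0W1}), connectivity of $G_{\bw}$, and the absence of the pattern $3412$, equivalently of an induced $4$-cycle, which holds because a tree permutation is in particular a forest permutation. (S1) Every element of $B_{2i}$ is smaller than $l_{2i-1}$: since $B_{2i-1}$ consists of left-to-right maxima, $l_{2i-1}$ is the running maximum throughout $B_{2i}$, whose entries are not maxima. (S2) $f_{2i}<f_{2i-1}$: the vertex $f_{2i-1}$ has a neighbour, necessarily a later $W_0$ vertex smaller than it, and the smallest later $W_0$ vertex is $f_{2i}$. (S3) $f_{2i+2}<l_{2i-1}$, while every other vertex of $B_{2i+2}\cup B_{2i+4}\cup\dots$ exceeds $l_{2i-1}$: the first inequality follows from a cut argument (if $f_{2i+2}>l_{2i-1}$, then no edge joins $B_1\cup\dots\cup B_{2i}$ to $B_{2i+1}\cup\dots$, contradicting connectivity), and the second by exhibiting a $3412$ pattern on $l_{2i-1}$, a later maximum $l_{2i+1}$, the vertex $f_{2i+2}$, and the offending vertex. (S4) For $w_k\in B_{2i-1}$ with $w_k\neq l_{2i-1}$, every vertex of $B_{2i}\setminus\{f_{2i}\}$ and every vertex of $B_{2i+2}\cup\dots$ exceeds $w_k$; both parts come from a $3412$ pattern (for the first, on $w_k,l_{2i-1},f_{2i},v$, which would otherwise create the induced $4$-cycle $w_k\, f_{2i}\, l_{2i-1}\, v$).

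Granting the staircase, the first three cases are immediate from the criterion: for $w_k\in B_{2i-1}$ with $w_k\ne l_{2i-1}$, (S2) and (S4) give $N(w_k)=\{f_{2i}\}$, which is case~(a); for $w_k=l_{2i-1}$, (S1) puts all of $B_{2i}$ into $N(w_k)$, while (S3) adds $f_{2i+2}$ and excludes everything else, giving case~(b), and case~(c) is the same computation when $B_{2i}$ is the last block. Finally I would obtain (d), (e), (f) for free from the reverse--complement symmetry $\bw\mapsto\bw^{rc}$, $w^{rc}_i=n+1-w_{n+1-i}$: this involution preserves tree permutations, induces the graph isomorphism $x\mapsto n+1-x$, and interchanges the roles of $W_0$ and $W_1$ as well as of the first and last blocks, so that (d), (e), (f) are exactly (a), (b), (c) read inside $G_{\bw^{rc}}$. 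I expect the staircase---specifically pinning down the order of $f_{2i+2}$ relative to the non-maximal entries of $B_{2i-1}$ in (S3)--(S4), and selecting the correct four vertices for each forbidden pattern---to be the main obstacle; the adjacency criterion and the symmetry reduction are routine.
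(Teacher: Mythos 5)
Your proof is correct, and for parts (a)--(c) it is in substance the paper's own argument, repackaged. The paper also rules out a second neighbor of a non-terminal element of $B_{2i-1}$ by producing an induced $4$-cycle (it uses $w_{k+1}$ as the fourth vertex where you use $l_{2i-1}$), pins down the unique later neighbor as the smallest later element via the fact that a tree has no isolated vertex (your (S2)), and for $w_k=l_{2i-1}$ uses exactly your (S3): a connectivity cut to force $f_{2i+2}<l_{2i-1}$, and a $4$-cycle through an element of $B_{2i+1}$ to exclude a second small element of $\cup_{j>2i}B_j$. Your ``staircase'' (S1)--(S4) is a cleaner bookkeeping of the same two mechanisms (forbidden induced $4$-cycle, i.e.\ the pattern $3412$, plus connectivity). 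Where you genuinely diverge is (d)--(f): the paper gets these for free by inverting (a)--(c) across the bipartition of Lemma~\ref{lem:W0W1} --- every edge meets $W_1$, so for $v\in W_0$ one has $N(v)=\{u\in W_1: v\in N(u)\}$, which (a)--(c) list explicitly --- whereas you invoke the reverse--complement involution. That route works, but it rests on one assertion you state without justification: that $\bw\mapsto\bw^{rc}$ interchanges $W_0$ and $W_1$, equivalently that for a tree permutation $W_0$ coincides with the set of right-to-left minima. This is false for general permutations (for $3,2,1$ one has $W_0=\{2,1\}$ while the right-to-left minima are $\{1\}$), so it genuinely uses the tree structure; it does follow in two lines: since $G_{\bw}$ is triangle-free, $\bw$ avoids $321$, so every element of $W_0$ (which has a larger element before it) has no smaller element after it, i.e.\ is a right-to-left minimum; and no vertex is simultaneously a left-to-right maximum and a right-to-left minimum, since such a vertex would be isolated, contradicting connectivity. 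With that line added, your symmetry reduction is sound; the paper's inversion of (a)--(c) is shorter, while your symmetry halves the case analysis at the cost of this extra lemma.
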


In words, this lemma says the following. 
If $w_k$ is a left-to-right maximum, to find $N(w_k)$, we start reading $\bw$ from $w_{k+1}$ and keep record of all the non-left-to-right maxima until we see the first 
left-to-right maximum followed by a non-left-to-right maximum. (This last non-left-to-right maximum following a left-to-right-maximum is also recorded.)
%non-left-to-right maximum followed by a left-to-right maximum.
These recorded vertices will be the neighbors of $w_k$. 
Similarly, if $w_k$ is not a left-to-right maximum, we start reading $\bw$ backwards from $w_{k-1}$ and keep record of all the left-to-right maxima until we see the first non-left-to-right maximum followed by a left-to-right maximum.

\begin{example}
Let $\bs w= 2,5,1,3,6,7,11,4,8,9,10$. Here $W_1=\{w_1,w_2,w_5,w_6,w_7\}$ and $W_0=\{w_3,w_4,w_8,w_9,w_{10},w_{11}\}$. We have $N(w_2)=N(5)=\{w_3,w_4,w_8\}$ and $N(w_8)=N(4)=\{w_2,w_5,w_6,w_7\}$. 
\end{example}

\begin{proof}[Proof of Lemma~\ref{lem:obs}]
We prove only the first three parts as the others follow immediately from the first three parts combined with Lemma~\ref{lem:W0W1}. 

$(a)$ If $\degree(w_k)>1$, then any two neighbors of $w_k$ together with $w_k$ and $w_{k+1}$ form a 4-cycle, which is a contradiction. Hence $\degree(w_k)=1$ and $w_k$ must be a neighbor of the smallest number appearing after $w_k$, which is~$f_{2i}$.

$(b)$ In this case $w_k$ is the largest element of $\cup_{j\le 2i}B_j$, which means that it is a neighbor of each element in $B_{2i}$. Moreover, if $w_k$ is not larger than any element of $\cup_{j>2i}B_j$, then there is no edge from $\cup_{j\le2i}B_j$ to $\cup_{j>2i}B_j$, a contradiction. If $w_k$ is greater than both of $w_a$ and $w_b$ for some $w_a,w_b \in \cup_{j>2i}$, then $w_k$, $w_a$, $w_b$ and any element of $B_{2i+1}$ form a 4-cycle, a contradiction. Thus, $w_k$ is greater than exactly one element in $\cup_{j>2i}$, which is $f_{2i+2}$.

$(c)$ This is similar to $(b)$ but since $B_{2i+2}$ does not exist in this case, all the neighbors of $w_k$ are in $B_{2i}$.
\end{proof}

\begin{corollary}
Let $\bs d= (\degree(w_1),\dots,\degree(w_n))$. If there are $2k$ blocks in the block decomposition of $\bw$, then
\[
\bs d= (\dots, 1^{b_{2i-1}-1},b_{2i}+1-\ind{(i=k)},b_{2i-1}+1-\ind{(i=1)},1^{b_{2i}-1},\dots)
\]
where $\ind{A}$ denotes the indicator of $A$.
\end{corollary}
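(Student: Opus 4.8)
The statement is a direct bookkeeping consequence of Lemma~\ref{lem:obs}, so the plan is to read off the degree at each position by locating it within its block and applying the appropriate case. First I would recall from the remark following Lemma~\ref{lem:W0W1} that the elements of $W_1$ and the elements of $W_0$ each appear in increasing order in $\bw$; combined with the fact that each block $B_j$ consists of consecutive indices and lies entirely in $W_1$ or $W_0$, this means that within every block the values increase with the index. Consequently the largest element $l_j$ of a block occupies its last position and the smallest element $f_j$ occupies its first position. This pins down exactly where the special positions flagged in Lemma~\ref{lem:obs} occur.

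Next I would write out the degrees for one generic pair of consecutive blocks $(B_{2i-1},B_{2i})$ in increasing index order. The first $b_{2i-1}-1$ positions of $B_{2i-1}$ hold elements other than $l_{2i-1}$, so by part (a) each has degree $1$, giving the segment $1^{b_{2i-1}-1}$. The last position of $B_{2i-1}$ holds $l_{2i-1}$, whose degree is supplied by part (b) or (c). The first position of $B_{2i}$ holds $f_{2i}$, whose degree is supplied by part (e) or (f), and the remaining $b_{2i}-1$ positions of $B_{2i}$ hold elements other than $f_{2i}$, each of degree $1$ by part (d), contributing $1^{b_{2i}-1}$. Splicing these four pieces together across all $i\in[k]$ reproduces the claimed vector $\bs d$.

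The only point requiring care is the two boundary corrections, and I would handle them by matching the indicator terms to the case distinctions in Lemma~\ref{lem:obs}. For $l_{2i-1}$, part (b) gives degree $b_{2i}+1$ whenever $B_{2i}$ is not the last block (that is, $i<k$), while part (c) gives $b_{2i}$ when $B_{2i}$ is the last block (that is, $i=k$); both cases are captured by $b_{2i}+1-\ind{(i=k)}$. Symmetrically, for $f_{2i}$, part (e) gives $b_{2i-1}+1$ when $i\ge 2$, while part (f) gives $b_1$ when $i=1$; since $b_1=b_{2i-1}$ at $i=1$, both cases are $b_{2i-1}+1-\ind{(i=1)}$. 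I expect this boundary accounting to be the only possible source of error, since everything else is a mechanical transcription of Lemma~\ref{lem:obs}. In the write-up I would therefore state the degree pattern for a typical middle pair $(B_{2i-1},B_{2i})$ with $1<i<k$ and then remark that the two extreme pairs differ solely through these two indicator terms.
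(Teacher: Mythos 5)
Your proposal is correct and follows the paper's route exactly: the paper states this corollary without a separate proof, treating it as an immediate transcription of Lemma~\ref{lem:obs}, which is precisely what you carry out. Your one substantive observation---that $l_j$ and $f_j$ occupy the last and first positions of $B_j$ because elements of $W_0$ and of $W_1$ each appear in increasing order---is also noted explicitly in the paper just before Lemma~\ref{lem:obs}, and your matching of the indicators $\ind{(i=k)}$ and $\ind{(i=1)}$ to cases (b)/(c) and (e)/(f) is exactly right.
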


Since $w_1 \in W_1$ and $w_n \in W_0$, there are at most $2^{n-2}$ pairs $(W_0,W_1)$. 
A pair $(W_0,W_1)$ can be encoded by a vector $(a_1=1, a_2,\dots,a_{n-1},a_n=0) \in \{0,1\}^n$: we have $a_k=1$ if and only if $w_k\in W_1$. 

\begin{lemma}\label{lem:distinctW0W1}
Distinct tree permutations correspond to distinct pairs $(W_0,W_1)$.
\end{lemma}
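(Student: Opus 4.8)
The plan is to show that the encoding map $\bw\mapsto(a_1,\dots,a_n)$ is injective on $\stp{n}$. Since $w_1$ is always a left-to-right maximum and $w_n\in W_0$ by~\eqref{n-m}, every image satisfies $a_1=1$ and $a_n=0$, so there are at most $2^{n-2}$ possible vectors, exactly matching $|\stp{n}|=2^{n-2}$ from Theorem~\ref{cor: t_n=}; thus injectivity will in fact upgrade to a bijection. The natural way to obtain injectivity is to run the insertion algorithm in reverse, so I would argue by induction on $n$ that the vector $(a_1,\dots,a_n)$ determines the sequence in $\{I_1,I_2\}^{n-2}$ that builds $\bw$, which by the bijection established just before Theorem~\ref{cor: t_n=} determines $\bw$ itself.

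The core computation is to record how each operation changes the pattern of left-to-right maxima for a tree permutation $\bw\in\stp{N}$, whose pattern $(a_1,\dots,a_N)$ has $a_N=0$. For $I_1$, the inserted value $N+1$ is a new global maximum placed in the penultimate position while $w_N$ is pushed to the last slot; since all earlier values and their running maxima are untouched, the effect is to insert a single $1$ immediately before the final coordinate, producing $(a_1,\dots,a_{N-1},1,0)$. For $I_2$, the value $N$ is replaced by $N+1$ in place (still a left-to-right maximum) and $N$ is appended at the end; using~\eqref{n-m} I would note that every position after the one holding $N$ is already a non-maximum and that the appended $N$ is dominated by the $N+1$ now sitting earlier, so the pattern simply gains a trailing $0$, producing $(a_1,\dots,a_N,0)$.

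With these two rules in hand, the last operation is read off from the penultimate bit $a_{n-1}$ of the length-$n$ pattern: because the original last bit is always $0$, operation $I_1$ leaves $a_{n-1}=1$ while $I_2$ leaves $a_{n-1}=0$, so the two cases are disjoint and decidable from the vector alone. Deleting the corresponding coordinate returns the pattern of the unique predecessor in $\stp{n-1}$, and the induction hypothesis finishes the recovery of the full operation sequence; the base case $n=2$ (the single permutation $2,1$ with pattern $(1,0)$) is immediate. The main obstacle is the bookkeeping in the middle paragraph, especially for $I_2$: one must verify carefully that no left-to-right-maximum status flips unexpectedly when $N$ is promoted to $N+1$, and this is precisely where the explicit tail structure~\eqref{n-m} (equivalently, $w_N\in W_0$ together with the fact that every position following the global maximum is a non-record) is needed.
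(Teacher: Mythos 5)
Your proof is correct and follows essentially the same route as the paper: both track how $I_1$ and $I_2$ update the encoding vector (insert a $1$ in the penultimate slot versus append a trailing $0$), read the last operation off the penultimate bit, and conclude because each tree permutation arises from a unique sequence in $\{I_1,I_2\}^{n-2}$. The paper reads the whole operation sequence at once from the bits $a_2,\dots,a_{n-1}$ rather than peeling off one operation per induction step, but that is only a cosmetic difference, and your extra verification that no left-to-right-maximum status flips under $I_2$ is a welcome detail the paper leaves implicit.
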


\begin{proof}
Consider the insertion algorithm given in Section~\ref{sec: trees}. 
Let $\bs a =(1,a_2,\dots,a_{n-1},0)$ be the described encoding of the bipartition of a tree permutation $\bw$ of $[n]$. 
If we insert $n+1$ to $\bw$ via $I_1$, then the new sequence is updated to $(1,a_2,\dots,a_{n-1},1,0)$. If we insert $n+1$ to $\bw$ via $I_2$, then the new sequence is updated to $(1,a_1,\dots,a_{n-1},0,0)$. The only difference between these two updated sequences is on their next-to-last components. Hence, for $k\ge 3$, if $k$ is inserted to the permutation via $I_1$, we have $a_{k-1}=1$, otherwise $a_{k-1}=0$. This finishes the proof of the lemma 
 since a tree permutation is obtained via a unique sequence of $I_1$'s and $I_2$'s.
\end{proof}

In the proof of the following lemma and later, we mean by a \emph{block of a 0-1 sequence} a maximal run of 0's or 1's in the sequence.

\begin{lemma}\label{thm:degrees}
Let $\bs y$ denote a random 0-1 sequence of length $n-2$. Let $Y_i$ denote the number of blocks whose length is equal to $i$ in $\bs y$ and let $Y=\sum Y_i$. Then,
\[
(D_1,D_2,D_3\dots) \stackrel{d}{=} (n-Y,Y_1,Y_2\dots),
\]
where $\stackrel{d}{=}$ means `equal in distribution'.
\end{lemma}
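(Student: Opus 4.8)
The plan is to upgrade the claimed distributional identity to a \emph{deterministic} one, holding for every individual tree. By Lemma~\ref{lem:distinctW0W1} and its proof, a tree permutation $\bw$ of length $n$ is encoded bijectively by the $0$--$1$ vector $\bs a=(a_1,\dots,a_n)$ with $a_1=1$, $a_n=0$, where $a_k=1$ iff $w_k\in W_1$; moreover the sequence of insertions that builds $\bw$ is recorded precisely by the middle block $\bs y:=(a_2,\dots,a_{n-1})\in\{0,1\}^{n-2}$. Since $T_n$ is uniform over $\spt{n}$, the associated $\bs y$ is uniform over $\{0,1\}^{n-2}$, so it suffices to show that, for each fixed $\bw$ (equivalently each fixed $\bs a$), the degree sequence of $G_{\bw}$ equals $(n-Y,Y_1,Y_2,\dots)$ with $Y_i,Y$ the block counts of the corresponding $\bs y$.

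First I would read off the degree multiset from the Corollary. Writing $b_1,\dots,b_{2\ell}$ for the block sizes of $\bs a$, exactly $2\ell$ vertices are \emph{special} (one per block), and their degrees form the multiset $\{b_1,b_{2\ell}\}\cup\{b_j+1:2\le j\le 2\ell-1\}$; here the degree $b_1$ comes from $f_2$ and the degree $b_{2\ell}$ from $l_{2\ell-1}$, while the two endpoint blocks lack the ``$+1$'' carried by every interior block. All remaining $n-2\ell$ vertices are leaves. I would then introduce the \emph{effective sizes} $b_1-1,\,b_2,\,b_3,\dots,b_{2\ell-1},\,b_{2\ell}-1$, chosen so that each special vertex has degree exactly (effective size)$+1$. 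In particular a special vertex is itself a leaf precisely when its effective size is $0$, i.e.\ when a boundary block of $\bs a$ has size $1$.

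The crux is a short comparison of the blocks of $\bs a=(1,\bs y,0)$ with those of $\bs y$: prepending a $1$ either lengthens the first block of $\bs y$ by one (if $\bs y$ begins with $1$) or creates a new length-$1$ block (if $\bs y$ begins with $0$), and symmetrically for appending a $0$. Running through the four cases according to the first and last symbols of $\bs y$, I would verify that in each case the multiset of \emph{nonzero} effective sizes of $\bs a$ equals the multiset of block sizes of $\bs y$; the at most two zeros that can arise correspond exactly to the boundary blocks of $\bs a$ of size $1$.

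The identity then falls out. The number of degree-$(k+1)$ vertices equals the number of effective blocks of size $k\ge 1$, which equals $Y_k$; hence $D_{k}=Y_{k-1}$ for $k\ge 2$. For the leaves, the $n-2\ell$ nonspecial vertices together with the $2\ell-Y$ special vertices of effective size $0$ give $D_1=(n-2\ell)+(2\ell-Y)=n-Y$. This proves the pointwise identity, and the distributional statement follows from the uniformity of $\bs y$. I expect the only real friction to be the boundary bookkeeping in the four-case comparison — ensuring that the two endpoint blocks (which lose a unit of size and may collapse to leaves) are matched correctly against the behaviour of $\bs y$ at its two ends; the ``effective size'' device is designed precisely to absorb this.
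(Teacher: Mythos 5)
Your proof is correct and takes essentially the same route as the paper's: both rest on the bijection of Lemma~\ref{lem:distinctW0W1} between tree permutations and uniform $0$--$1$ sequences of length $n-2$, combined with the degree description of Lemma~\ref{lem:obs}, reduced to a deterministic (pointwise) identity between the degree sequence and the block counts of $\bs y$. Your ``effective size'' device and four-case boundary comparison of the blocks of $(1,\bs y,0)$ with those of $\bs y$ is just an explicit write-up of the step the paper leaves as ``easy to verify'' when it passes to the block decomposition of $w_2,\dots,w_{n-1}$, whose blocks are exactly your nonzero effective sizes.
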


\begin{proof}
By the previous lemma and the encoding of the pairs $(W_0,W_1)$, there is a canonical bijection between 0-1 sequences of length $n-2$ and tree permutations of $[n]$. Let $W_0'=W_0\setminus\{w_n\}$ and $W_1'=W_1\setminus\{w_1\}$ and consider the block decomposition $B_1',\dots,B_k'$ of $w_2,\dots,w_{n-1}$. Unlike the former case (where $w_1$ and $w_n$ were also considered), now we may have $B_1'\subseteq W_0'$, $B_k'\subseteq W_1'$, or $k$ odd. It is easy to verify using Lemma~\ref{lem:obs} that corresponding to each block $B_j'$, there is a unique element $w_k$ of degree $|B_j'|+1$. All the remaining elements are of degree 1. 

Now the lemma follows immediately from coupling the random tree permutation $\bt$ with $\bs y$, i.e.\ by letting $\bt$ be the permutation represented by $\bs y$.
\end{proof}

Let $\hd$ denote the highest degree in $T_n$. An immediate consequence of Lemma~\ref{thm:degrees} is the following.

\begin{corollary}\label{cor:highest}
If $\bs y$ and $Y_i$ are as defined in Lemma~\ref{thm:degrees}, then
\[
\hd \stackrel{d}{=} 1+\max\{i: Y_i>0\}.
\]
In other words, $\hd-1$ is distributed as the size of the largest block in a random 0-1 sequence of length $n-2$.
\end{corollary}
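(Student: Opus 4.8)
The plan is to obtain the statement by pushing the distributional identity of Lemma~\ref{thm:degrees} forward through a single fixed functional. First I would record that the highest degree is a deterministic function of the degree-count vector, namely
\[
\hd = \Phi(D_1,D_2,\dots), \qquad \Phi(x_1,x_2,\dots):=\max\{k\ge 1: x_k>0\}.
\]
This $\Phi$ is well defined (and measurable) on sequences having only finitely many nonzero coordinates. Both $(D_1,D_2,\dots)$ and $(n-Y,Y_1,Y_2,\dots)$ have this property: a tree on $n$ vertices has all degrees at most $n-1$, and a $0$--$1$ sequence of length $n-2$ has all block sizes at most $n-2$. So $\Phi$ may be applied to either vector.

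Next I would invoke Lemma~\ref{thm:degrees}, which gives $(D_1,D_2,\dots) \stackrel{d}{=} (n-Y,Y_1,Y_2,\dots)$. Since equality in distribution is preserved under any fixed measurable map, applying $\Phi$ to both sides yields
\[
\hd \stackrel{d}{=} \Phi(n-Y,Y_1,Y_2,\dots).
\]
It then remains to evaluate the right-hand side. Reading off coordinates, coordinate $1$ equals $n-Y$ and coordinate $k$ equals $Y_{k-1}$ for $k\ge 2$, so the index set being maximized is $\{1\}\cup\{\,k\ge 2: Y_{k-1}>0\,\}$.

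Here I would carry out the short bookkeeping. Since $Y$ counts the blocks of a sequence of length $n-2$ we have $Y\le n-2$, hence $n-Y\ge 2>0$, so coordinate $1$ is always positive. For $n\ge 3$ (the setting of Lemma~\ref{thm:degrees}) the sequence $\bs y$ is nonempty, so it has at least one block and $\max\{i:Y_i>0\}$ is well defined and at least $1$. Consequently the largest index $k\ge 2$ with coordinate $k$ positive equals $1+\max\{i:Y_i>0\}\ge 2$, which dominates the index $1$ coming from the leaf count. Therefore $\Phi(n-Y,Y_1,Y_2,\dots)=1+\max\{i:Y_i>0\}$, and since $\max\{i:Y_i>0\}$ is by definition the size of the largest block in $\bs y$, both the displayed identity and its reformulation follow.

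The argument is essentially bookkeeping, and I do not expect any substantive obstacle; the only point deserving a moment of care is confirming that coordinate $1$ (the leaf count, which is always positive) never spuriously becomes the maximizing index, which is exactly why I verify $1+\max\{i:Y_i>0\}\ge 2$ explicitly rather than treating the identity as purely formal.
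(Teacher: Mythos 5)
Your proposal is correct and takes essentially the same route as the paper, which states Corollary~\ref{cor:highest} as an immediate consequence of Lemma~\ref{thm:degrees}: one reads off the highest degree as the largest index with a positive coordinate in the degree-count vector and pushes the distributional identity through that map. Your explicit bookkeeping (that the first coordinate $n-Y\ge 2$ is always positive but never maximizing, since $1+\max\{i:Y_i>0\}\ge 2$ for $n\ge 3$) is precisely the detail the paper leaves implicit.
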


A random 0-1 sequence of length $n$ can be generated by first choosing the first element randomly and then by flipping a coin $n-1$ times. During this flipping process, if a tail ($T$) comes up, then we put the same symbol as the previous one and hence extend the size of the current block by one, and if a head ($H$) comes up, we put a different symbol than the previous one and hence start a new block. For instance, for $n=8$, if the first symbol of the sequence is 0 and the outcome of coin flips is $THHTTHT$, then we have the sequence $00100011$. Note that the number of blocks in the 0-1 sequence obtained this way is one more than the number of heads and the size of the largest block is one more than the longest run of tails. Now let $\tail{n}$ denote the longest run of tails in a sequence of $n$ coin flips. Combining this fact with Corollary~\ref{cor:highest}, we get
\be \label{Hn=}
\hd \stackrel{d}{=} 2+ \tail{n-3}.
\ee

F\"{o}ldes~\cite{Foldes} proved the following result regarding the distribution of $\tail{n}$ (see also earlier,  unpublished work by Boyd~\cite{Boyd}).

\begin{theorem}[F\"{o}ldes]\label{thm:Foldes}
For any integer $k$, we have
\[	\pr(\tail{n}-\lf \log_2n\rf < k) = \exp\lp -2^{-k-1+\{\log_2n\}}\rp +o(1),				\]
where $\{x\}:= x-\lf x \rf$ for any positive real number $x$.
\end{theorem}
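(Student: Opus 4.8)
The plan is to establish this Poisson-type limit by the standard moment method applied to suitably \emph{declumped} run events, converting the statement about $\tail{n}$ into a statement about a near-Poisson counting variable. Write $r = r_n = \lf \log_2 n\rf + k$, so that $\{\tail{n} - \lf \log_2 n\rf < k\}$ is exactly the event that no run of $r$ consecutive tails occurs among the $n$ flips. I would introduce a counting variable $Z$ recording the number of maximal tail-runs of length at least $r$; since $\{\tail{n} < r\} = \{Z = 0\}$, the theorem reduces to showing $\pr(Z = 0) = e^{-\lambda} + o(1)$ with $\lambda = 2^{-k-1+\{\log_2 n\}}$.

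First I would compute $\mean Z$. Counting each clump by the position $i$ at which its run begins, the clump-start event requires positions $i, \ldots, i+r-1$ to be tails and position $i-1$ (when it exists) to be a head. For $2 \le i \le n-r+1$ this event has probability $2^{-r-1}$, while the boundary case $i=1$ contributes only $2^{-r} = O(1/n)$ and is negligible. Using $n = 2^{\lf \log_2 n\rf}\,2^{\{\log_2 n\}}$ one gets $\mean Z = (n-r)\,2^{-r-1} + o(1) \to 2^{-k-1+\{\log_2 n\}} = \lambda$. The factor $2^{-r-1}$ rather than $2^{-r}$ (equivalently, the $-1$ in the exponent of $\lambda$) is precisely the effect of declumping: requiring a head just to the left of the run forces each long run to be counted once, at its left end.

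Next I would upgrade this to full Poisson convergence by the method of factorial moments. For fixed $j$, the $j$-th factorial moment $\mean\lb (Z)_j\rb$ splits into the contribution of well-separated tuples (pairwise index gaps at least $r+1$) and a remainder. Well-separated clump-start events are independent, so the leading sum is $\approx n^j\, 2^{-(r+1)j} = \lambda^j$, matching the $j$-th factorial moment of $\mathrm{Poisson}(\lambda)$. Tuples that are not well separated either are outright incompatible (a required head would land inside a required tail-run, giving probability $0$) or have overlapping runs whose joint probability is smaller by a factor $2^{\Theta(r)}$; summing these over the $O(n^{j-1}r)$ offending configurations shows their total contribution is $o(1)$. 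Convergence of every factorial moment to $\lambda^j$ gives $Z \stackrel{d}{\to} \mathrm{Poisson}(\lambda)$, hence $\pr(Z=0) \to e^{-\lambda}$.

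The main obstacle is the clustering control in the previous step: one must verify that the dependent, overlapping configurations are negligible and—more delicately—that all error terms are uniform in $n$. Uniformity matters because $\lambda$ does not converge but oscillates with $\{\log_2 n\}$; since $\lambda$ stays in the bounded range $[2^{-k-1}, 2^{-k})$, the bounded-$\lambda$ regime keeps every factorial moment and every tail estimate uniformly controlled, and the $o(1)$ in the statement absorbs the discretization gap between the integer $r$ and the real $\log_2 n$. Alternatively, one could bypass convergence in distribution and bound $\pr(Z=0)$ directly by sandwiching it between the consecutive Bonferroni partial sums $\sum_{j\le 2m}(-1)^j S_j$ and $\sum_{j\le 2m+1}(-1)^j S_j$, where $S_j = \mean\lb(Z)_j\rb/j! \to \lambda^j/j!$, and then letting $m\to\infty$ after $n\to\infty$; this is the cleanest route, as it needs only upper and lower estimates on the $S_j$ rather than exact asymptotics for each one.
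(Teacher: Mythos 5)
The paper does not actually prove this statement: Theorem~\ref{thm:Foldes} is quoted from F\"{o}ldes~\cite{Foldes} (with a pointer to Boyd's unpublished work~\cite{Boyd}) and is used as a black box to derive the corollary about $\hd$, so there is no in-paper proof to compare yours against. Your proposal is, however, a correct and self-contained derivation by the Poisson-clumping method made rigorous through factorial moments: the reduction of $\{\tail{n}-\lf \log_2 n\rf < k\}$ to $\{Z=0\}$ with $r=\lf\log_2 n\rf+k$, the declumped mean $(n-r)2^{-r-1}+O(2^{-r})=2^{-k-1+\{\log_2 n\}}+o(1)$, and the Bonferroni sandwich are all sound. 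Two small remarks. First, your treatment of non-separated tuples is more cautious than necessary: since each clump-start event demands a head immediately to the left of $r$ forced tails, any two start positions within distance $r$ of each other are outright contradictory (the required head of the later event lands inside the required tail-run of the earlier one), so the ``overlapping runs with probability smaller by a factor $2^{\Theta(r)}$'' case never actually occurs, and the error in $\mean[(Z)_j]$ is purely the $O(n^{j-1}r\,2^{-(r+1)j})=o(1)$ counting correction from excluded tuples. Second, the phrase ``$Z\stackrel{d}{\to}\mathrm{Poisson}(\lambda)$'' is strictly speaking ill-posed, because $\lambda=\lambda_n=2^{-k-1+\{\log_2 n\}}$ oscillates and has no limit; but you correctly diagnose this, and your Bonferroni formulation --- bracketing $\pr(Z=0)$ between consecutive partial sums $\sum_{j\le M}(-1)^jS_j$ with $S_j=\lambda_n^j/j!+o(1)$ for each fixed $j$, uniformly manageable because $\lambda_n$ stays in the compact range $[2^{-k-1},2^{-k})$ --- is exactly the right way to conclude $\pr(Z=0)=e^{-\lambda_n}+o(1)$, which is the theorem as stated. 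For the record, F\"{o}ldes's original argument proceeds by different (exact recursive/analytic) means rather than Poisson approximation; your route is more elementary in its probabilistic ingredients and generalizes immediately to biased coins and to runs of prescribed patterns, which is in the spirit of how the paper itself uses pattern counts like $HT^{k-1}H$ elsewhere.
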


Combining Theorem~\ref{thm:Foldes} with~\eqref{Hn=} gives the following corollary.

\begin{corollary}
For any integer $k$, we have
\[ 
\pr(\hd-\lf \log_2(n-3)\rf < k) = \exp\lp -2^{-k+1+\{\log_2(n-3)\}}\rp +o(1).
\]	
\end{corollary}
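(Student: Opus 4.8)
The plan is to reduce the claim about $\hd$ to the distributional identity~\eqref{Hn=} and then to quote F\"{o}ldes's theorem with a shifted threshold. Throughout write $N=n-3$, so that~\eqref{Hn=} reads $\hd \stackrel{d}{=} 2+\tail{N}$. Since a distributional identity means $\pr(\hd\in A)=\pr(2+\tail{N}\in A)$ for every event $A$, and the quantity $\lf \log_2(n-3)\rf$ is a deterministic integer, I can transfer the target event directly without any further probabilistic input.

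First I would rewrite the left-hand side. Taking $A$ to be the half-line below $k+\lf\log_2 N\rf$ and using $\hd \stackrel{d}{=} 2+\tail{N}$,
\[
\pr\lp\hd-\lf \log_2(n-3)\rf < k\rp = \pr\lp 2+\tail{N}-\lf \log_2 N\rf < k\rp = \pr\lp \tail{N}-\lf \log_2 N\rf < k-2\rp ,
\]
where the last step is the arithmetic rearrangement $2+x<k \iff x<k-2$. This is exactly the expression appearing on the left-hand side of Theorem~\ref{thm:Foldes}, evaluated at the integer $k-2$ in place of $k$ and at $N$ in place of $n$.

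Next I would apply Theorem~\ref{thm:Foldes} verbatim with the substitution $k\mapsto k-2$, obtaining
\[
\pr\lp \tail{N}-\lf \log_2 N\rf < k-2\rp = \exp\lp -2^{-(k-2)-1+\{\log_2 N\}}\rp + o(1) = \exp\lp -2^{-k+1+\{\log_2 N\}}\rp + o(1),
\]
the last equality being the simplification $-(k-2)-1=-k+1$ in the exponent. Recalling $N=n-3$ gives the stated formula, and the error term is $o(1)$ as $n\to\infty$ because $N\to\infty$ precisely when $n\to\infty$.

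There is essentially no hard step; the one point that requires care is the bookkeeping of the two shifts. The additive constant $2$ in $\hd \stackrel{d}{=} 2+\tail{N}$ moves the threshold from $k$ to $k-2$, and this shift in turn converts the constant $-1$ inside F\"{o}ldes's exponent into $+1$, via $-(k-2)-1=-k+1$. That single change is exactly the discrepancy between the exponent stated in Theorem~\ref{thm:Foldes} and the one asserted in the corollary, so tracking it correctly is all that the proof amounts to.
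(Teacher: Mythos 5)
Your proof is correct and follows exactly the route the paper intends: the corollary is stated there as an immediate consequence of combining the identity $\hd \stackrel{d}{=} 2+\tail{n-3}$ from~\eqref{Hn=} with Theorem~\ref{thm:Foldes}, which is precisely your argument. Your shift bookkeeping (applying F\"{o}ldes's theorem at the integer $k-2$ and simplifying $-(k-2)-1=-k+1$) verifies the stated exponent, so there is nothing to add.
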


In view of this discussion and Theorem~\ref{thm:degrees}, for the distribution of the 
degree sequence $(D_i)$,  
it is enough to study the number of blocks of a given size in a random 0-1 sequence of length $n-2$ or the runs of tails in flipping a coin $n-3$ times. 
Let $\bs s=(s_1,\dots,s_{n-3})$ denote a random $H$-$T$ sequence which represents the outcome of $n-3$ coin flips.
Let $\bs y=(y_1,\dots,y_{n-2})$ denote the random 0-1 sequence corresponding to $\bs s$, where $y_1$ is chosen randomly and independently of $\bs s$.
Let $Y_k$ denote the number of blocks of size $k$ in $\bs y$. Each run of {tails} of length $k-1$ corresponds to a block of size $k$ in $\bs y$. Let $Y^*_k$ count the strings of length $k+1$ equal to $HT^{k-1}H$ in $\bs s$. Denoting the sizes of the first and last blocks in $\bs y$ by $b_1$ and $b_{l}$, respectively , we have 
\be \label{Y-Y*}
Y^*_k= Y_k-\ind{(b_1=k)}-\ind{(b_l=k)} \quad \text{and} \quad \sum_{k\ge 2}(Y_k-Y_k^* )\in \{0,1,2\}.
\ee
Letting $\xi_i=\xi_i(k)$ be the indicator of $\{ (s_i,\dots,s_{i+k})=(HT^{k-1}H)\}$ and using the linearity of the expectation, we get
\be \label{meanY*}
\mean{Y_k^*}= \sum_{i=1}^{n-k-3} \mean{\xi_i} = (n-k-3)2^{-k-1}.
\ee
Also, routine calculations yield
\begin{align} \label{varY*}
\var{(Y_k^*)} = \frac{2^{k+1}+1-2k}{2^{2k+2}}n +O(k2^{-k}).
\end{align}

Together with Chebyshev's Inequality, these two equations  imply that $Y_k^*$ (and hence $Y_k$) is concentrated around its expected value, which is roughly $n/2^{k+1}$.  In fact, we have the joint normality of $(Y_1,\dots,Y_m)$  and hence of the degrees $(D_1,\dots,D_m)$ for any fixed $m$.

\begin{theorem}\label{thm:degrees,clt}
For any $m\ge1$, as $n\to\infty$ one has 
\[
\frac1{\sqrt n} \left(Y^*_{k}-\E Y^*_{k}\right)_{k=1}^m \stackrel d\to N(0,\Sigma),
\]
where $\Sigma=[\sigma_{i,j}]$ with 
\[
\sigma_{i,i}=\frac1{2^{i+1}}\left(1-\frac{2i-3}{2^{i+1}}\right),\quad\mbox{and\ }\sigma_{i,j}
=-\frac{i+j-3}{2^{i+j+2}}, \quad i\ne j,\quad i,j=1,\dots,m.
\]  
\end{theorem}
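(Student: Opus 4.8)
\section*{Proof proposal for Theorem~\ref{thm:degrees,clt}}

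The plan is to establish joint asymptotic normality via the Cram\'er--Wold device, reducing the multivariate statement to a one--dimensional central limit theorem for a sum of \emph{bounded, weakly dependent} variables, and then to identify the limiting variance with $\bs t^\top\Sigma\,\bs t$ by a direct covariance computation. First I would fix an arbitrary vector $\bs t=(t_1,\dots,t_m)$ and write
\[
S_n:=\frac1{\sqrt n}\sum_{k=1}^m t_k\lp Y_k^*-\mean Y_k^*\rp
=\frac1{\sqrt n}\sum_{i}\lp Z_i-\mean Z_i\rp,
\qquad
Z_i:=\sum_{k=1}^m t_k\,\xi_i(k),
\]
where $\xi_i(k)$ is the indicator of $\{(s_i,\dots,s_{i+k})=HT^{k-1}H\}$ as in~\eqref{meanY*}. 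The key structural observation is that $Z_i$ is a \emph{bounded} function of the window $(s_i,\dots,s_{i+m})$ only, so $Z_i$ and $Z_j$ are independent whenever $|i-j|>m$; that is, $(Z_i)$ is an $m$-dependent sequence of uniformly bounded variables, stationary away from the two ends. By Cram\'er--Wold it suffices to prove $S_n\stackrel d\to N(0,\bs t^\top\Sigma\,\bs t)$ for every fixed $\bs t$.

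Next I would invoke the classical central limit theorem for strictly stationary $m$-dependent sequences (Hoeffding--Robbins): since the $Z_i$ are bounded, all moment hypotheses hold automatically, and $S_n\stackrel d\to N(0,\tau^2)$ with $\tau^2=\lim_{n\to\infty}\var(S_n)$. The finitely many boundary indices near $1$ and $n-3$, where a pattern cannot be completed and stationarity fails, contribute only $O(1)$ to $\sum_i Z_i$ and hence $O(n^{-1/2})$ to $S_n$; truncating to the stationary region and bounding this remainder shows it is negligible. (Alternatively one can run a method-of-cumulants argument: for a sum of local functionals of an i.i.d.\ sequence every cumulant is $O(n)$, so after dividing by $\sqrt n$ all cumulants of order $r\ge 3$ are $O(n^{1-r/2})\to0$, forcing a Gaussian limit.)

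It then remains to identify $\tau^2=\lim\var(S_n)=\sum_{a,b}t_at_b\lim\tfrac1n\cov(Y_a^*,Y_b^*)$ with $\bs t^\top\Sigma\,\bs t$, i.e.\ to show $\tfrac1n\cov(Y_a^*,Y_b^*)\to\sigma_{a,b}$. Writing $\cov(Y_a^*,Y_b^*)=\sum_{i,j}\cov(\xi_i(a),\xi_j(b))$, only pairs whose windows $[i,i+a]$ and $[j,j+b]$ overlap can contribute, i.e.\ $d:=j-i\in\{-b,\dots,a\}$, giving $a+b+1$ relevant offsets, each occurring for $\sim n$ positions. The crucial point is that the patterns $HT^{a-1}H$ and $HT^{b-1}H$ carry an $H$ only at their two endpoints, so a consistent \emph{simultaneous} occurrence is possible exactly when the two patterns meet at a shared endpoint $H$: namely $d=a$ (giving $HT^{a-1}HT^{b-1}H$) and $d=-b$ (giving $HT^{b-1}HT^{a-1}H$), each with $\mean[\xi_i\xi_j]=2^{-(a+b+1)}$ and covariance $2^{-(a+b+1)}-2^{-(a+b+2)}=2^{-(a+b+2)}$; for every other overlapping offset a forced clash between an endpoint $H$ and an interior $T$ makes $\mean[\xi_i\xi_j]=0$, contributing $-2^{-(a+b+2)}$. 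Summing the $2$ consistent and $a+b-1$ clashing offsets yields, for $a\ne b$,
\[
\frac1n\cov(Y_a^*,Y_b^*)\longrightarrow\bigl(2-(a+b-1)\bigr)2^{-(a+b+2)}=-\frac{a+b-3}{2^{a+b+2}}=\sigma_{a,b},
\]
while the diagonal case $a=b$ is handled identically---now the offset $d=0$ supplies the variance $\var(\xi_i(a))$---and recovers $\sigma_{a,a}$, consistent with the single-variable variance~\eqref{varY*}. With $\tau^2=\bs t^\top\Sigma\,\bs t$ confirmed, Cram\'er--Wold completes the proof. I expect this last, combinatorial step to be the main obstacle: the delicate part is the exhaustive enumeration of overlapping offsets and the verification that only the two endpoint-sharing configurations are jointly realizable, since an off-by-one in this count shifts every $\sigma_{a,b}$.
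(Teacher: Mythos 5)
Your proposal is correct and takes essentially the same approach as the paper: both rest on the Hoeffding--Robbins CLT for stationary $m$-dependent sequences built from the window indicators $\xi_i(k)$ (you reduce to the scalar case via Cram\'er--Wold where the paper invokes the vector-valued version directly), both dispose of the boundary non-stationarity as a negligible correction, and your offset-by-offset enumeration of overlapping patterns (two consistent endpoint-sharing offsets $d=a,\,d=-b$ versus $a+b-1$ clashing ones) is exactly the covariance computation the paper carries out through the Hoeffding--Robbins lag formula. Your entries $\sigma_{a,b}$ match the theorem precisely --- your diagonal value $\frac{2^{k+1}+3-2k}{2^{2k+2}}$ agrees with the paper's proof and the theorem statement rather than with the slightly misprinted coefficient in~\eqref{varY*}.
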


\begin{remark}
Since the joint convergence in $\Bbb R^\infty$ is defined through the joint convergence of any finitely many components, the above theorem can be re--stated as the convergence of the infinite--dimensional vector in $\Bbb R^\infty$
\[\frac1{\sqrt n}
\left(Y^*_{k}-\E Y^*_{k}\right)_{k=1}^\infty \stackrel d\to N(0,\Sigma),\]
with $\sigma_{i,j}$ given above for all $ i,j\ge1$.  
Furthermore, if $G=(G_1,G_2,\dots)$ is a mean--zero Gaussian vector in $\Bbb R^\infty$ with covariance matrix $\Sigma$ 
and $A:\Bbb R^\infty\to\Bbb R^\infty$ is an infinite dimensional matrix then $AG$ is Gaussian with the covariance matrix $A\Sigma A^T$. Applying this with matrix
\[A=\left[\begin{array}{ccccc}-1&-1&-1&-1&\dots\\1&0&0&0&\dots\\
0&1&0&0&\dots\\
0&0&1&0&\dots\\
\dots&\dots&\dots&\dots&\dots
\end{array}\right]\]
we obtain
\end{remark}
\begin{corollary} 
As $n\to\infty$
\[\frac1{\sqrt n}\left(D_k-\E D_k\right)_{k=1}^\infty\stackrel d\to N(0,A\Sigma A^T) \quad\mbox{in\ }\Bbb R^\infty,\]
where $A$ is as above and $\Sigma$ is as in Theorem~\ref{thm:degrees,clt}. In particular, for any $m\ge 1$ 
\[\frac1{\sqrt n}\left(D_k-\E D_k\right)_{k=1}^m\stackrel d\to N(0,(A\Sigma A^T)_{m\times m}),\]
where $(A\Sigma A^T)_{m\times m}$ is an $m\times m$ northwest corner of $A\Sigma A^T$.
\end{corollary}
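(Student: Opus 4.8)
The plan is to transfer the central limit theorem for the pattern counts $Y_k^*$ to the degree counts $D_k$ through an exact linear identity that is already implicit in Lemma~\ref{thm:degrees}; the only genuine difficulty is that the matrix $A$ is \emph{not} continuous in the product topology of $\Bbb R^\infty$, so the continuous mapping theorem cannot be applied blindly to its first (infinite-sum) row.

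First I would record the exact relation. By Lemma~\ref{thm:degrees} the vector $(D_1,D_2,D_3,\dots)$ has the same distribution as $(n-Y,Y_1,Y_2,\dots)$ with $Y=\sum_{k\ge1}Y_k$, so it suffices to prove the statement for the latter. After centering,
\[
(n-Y)-\E(n-Y)=-\sum_{k\ge1}(Y_k-\E Y_k),\qquad Y_{k-1}-\E Y_{k-1}\ (k\ge2),
\]
which is precisely $(D_k-\E D_k)_{k\ge1}=A\,(Y_k-\E Y_k)_{k\ge1}$ for the matrix $A$ in the statement: the top row produces $-\sum_k(Y_k-\E Y_k)$ and the shifted identity below it produces the coordinates $Y_{k-1}-\E Y_{k-1}$. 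Next I would pass from $Y_k^*$ to $Y_k$. By the first identity in~\eqref{Y-Y*} one has $0\le Y_k-Y_k^*\le2$, so $n^{-1/2}\big((Y_k-\E Y_k)-(Y_k^*-\E Y_k^*)\big)\to0$ in probability for each $k$; combined with Theorem~\ref{thm:degrees,clt} and Slutsky's theorem this yields $n^{-1/2}(Y_k-\E Y_k)_{k=1}^m\stackrel{d}{\to}N(0,\Sigma_{m\times m})$ for every $m$, i.e.\ convergence in $\Bbb R^\infty$.

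It remains to apply $A$, and this is where the obstacle lies: since convergence in $\Bbb R^\infty$ means joint convergence of finitely many coordinates, proving the corollary amounts, for each $m$, to the joint convergence of $n^{-1/2}\big(-(Y-\E Y),\,Y_1-\E Y_1,\dots,Y_{m-1}-\E Y_{m-1}\big)$, so the whole issue is to adjoin the \emph{total} block count $Y=\sum_k Y_k$ to the finitely many block counts already controlled. Because the first row of $A$ is an infinite sum, $A$ fails to be continuous for the product topology and one cannot simply invoke the continuous mapping theorem. I would resolve this by truncation: write $Y-\E Y=\sum_{k\le K}(Y_k-\E Y_k)+R_K$, where $R_K$ is the centered number of blocks of length exceeding $K$. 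For fixed $K\ge m-1$ the partial sum is a finite linear functional of $(Y_1-\E Y_1,\dots,Y_K-\E Y_K)$, hence converges jointly with the first $m-1$ coordinates to the Gaussian whose covariances are the truncated sums $\sum_{i,j\le K}\sigma_{i,j}$, $-\sum_{i\le K}\sigma_{i,j}$, and $\sigma_{i,j}$. The tail is negligible because, exactly as in the estimates~\eqref{meanY*}--\eqref{varY*}, blocks of length greater than $K$ are exponentially rare, giving $\lim_{K\to\infty}\limsup_{n\to\infty}n^{-1}\var(R_K)=0$.

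With the tail bound in hand, the standard three-$\varepsilon$ approximation theorem for convergence in distribution (letting $K\to\infty$ after $n\to\infty$) permits the full sum $Y-\E Y$ to replace the truncation in the limit; the limiting covariances converge to $\sum_{i,j}\sigma_{i,j}$, $-\sum_{i}\sigma_{i,j}$, and $\sigma_{i,j}$, which are exactly the entries of $(A\Sigma A^T)_{m\times m}$ obtained from the bilinear action of $A$ on $\Sigma$. This is the assertion. The main work is the tail estimate controlling $R_K$ (equivalently, the summability of the $\sigma_{i,j}$, which is what makes $A\Sigma A^T$ well defined despite the infinite first row of $A$); everything else is bookkeeping around the exact identity $(D_k-\E D_k)_{k\ge1}=A\,(Y_k-\E Y_k)_{k\ge1}$. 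As a consistency check, the total number of blocks is one more than the number of heads, so $Y=1+\Bin{n-3}{1/2}$ and $n^{-1/2}(Y-\E Y)\stackrel{d}{\to}N(0,1/4)$, which matches $(A\Sigma A^T)_{1,1}=\sum_{i,j}\sigma_{i,j}=1/4$.
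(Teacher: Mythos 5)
Your proof is correct, and it is actually more careful than the paper's own treatment. The paper ``proves'' this corollary only through the Remark preceding it: it restates Theorem~\ref{thm:degrees,clt} as convergence in $\mathbb{R}^\infty$ and then asserts that applying the infinite matrix $A$ to a Gaussian vector $G$ yields a Gaussian $AG$ with covariance $A\Sigma A^T$, leaving implicit both the passage from $Y_k^*$ to $Y_k$ and the justification that convergence in distribution survives the application of $A$. You fill exactly these two gaps: the Slutsky step via \eqref{Y-Y*} (harmless, since $0\le Y_k-Y_k^*\le 2$ deterministically), and --- the real issue --- the observation that the first row of $A$ is a discontinuous functional on $\mathbb{R}^\infty$ in the product topology, so the continuous mapping theorem cannot be invoked for the coordinate $D_1-\E D_1=-\sum_k(Y_k-\E Y_k)$. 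Your remedy, truncating $Y-\E Y$ at level $K$, establishing the uniform tail bound $\lim_{K\to\infty}\limsup_n n^{-1}\var(R_K)=0$ via $\var(Y_k)=O(n2^{-k})$ and $\var\bigl(\sum_{k>K}(Y_k-\E Y_k)\bigr)\le\bigl(\sum_{k>K}\sqrt{\var(Y_k)}\bigr)^2$, and then applying the three-$\eps$ approximation theorem, is precisely the device the paper itself deploys only later, in the proof of Theorem~\ref{thm:domin}, for the same kind of infinite sum of block counts; transplanting it here makes the corollary fully rigorous where the paper is informal, and it simultaneously shows why $A\Sigma A^T$ is well defined despite the infinite first row (summability of the $\sigma_{i,j}$). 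Your consistency check is also sound: $D_1=L_n\stackrel{d}{=}2+\Bin{n-3}{1/2}$ by Lemma~\ref{leaves}, so $\var(D_1)/n\to 1/4$, and indeed $\sum_{i,j}\sigma_{i,j}=1/4$ with the entries of $\Sigma$ from Theorem~\ref{thm:degrees,clt}.
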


\begin{proof}[Proof of Theorem~\ref{thm:degrees,clt}]

We will apply the  following Hoeffding--Robbins Central Limit Theorem~\cite{HR}. 
There are stronger versions of this theorem, see e.g. \cite{RW} and references therein, but the original version of Hoeffding--Robbins is enough for our purpose. 
Interestingly, while most of the later papers concentrate on 1--dimensional random variables, Hoeffding and Robbins actually give a version for random vectors. 
To be precise they state and prove the vector--valued version  for stationary sequences of 2--dimensional, $m$--dependent random vectors (see Theorem~3 in \cite{HR}) but state after the proof that \lq\lq The extension of Theorem~3 to the case $N>2$, as well as to the non--stationary case, is evident and will be left to the reader.\rq\rq \    
We recall that a sequence $(X_n)$ of random variables is $m$--dependent if  for all $k$ and $l$ 
in $\Bbb N$, $(X_1,\dots,X_k)$ and $(X_{k+n},\dots,X_{k+n+l})$ are independent whenever $n>m$, and it is stationary if, for any $j \in \Bbb N$, the distribution of random vector $(X_n,X_{n+1},\dots,X_{n+j})$ does not depend on $n$. 

  \begin{theorem}\label{h-r_thm} (Hoeffding--Robbins) Let $(X_{i,1},X_{i,2},\dots,X_{i,N})$, $i=1,2,\dots$ be a stationary and  $m$--dependent sequence of random vectors in $\Bbb R^N$ such that 
\[
\E X_{1,k}=0,\quad \E|X_{1,k}|^3<\infty,\quad  k=1,\dots,N.
\]
Then as $n\to\infty$ the random vector
\begin{equation}\label{clt}
\frac1{\sqrt n}\left(\sum_{i=1}^nX_{i,1},\dots,\sum_{i=1}^nX_{i,N}\right),
\end{equation}
has a limiting normal distribution with mean zero and covariance matrix $\Sigma=[\sigma_{j,k}]$,
where
\begin{equation}\label{cov}\sigma_{j,k}=\E X_{1,j}X_{1,k}+\sum_{l=1}^m\big(\E X_{1,j}X_{l+1,k}+\E X_{l+1,j}X_{1,k}\big)\quad j,k=1,\dots,N.
\end{equation}

\end{theorem}

We wish to apply this theorem to the random vectors 
\[(\xi_i(1)-\E\xi_i(1),\xi_i(2)-\E\xi_i(2),\dots,\xi_i(N)-\E\xi_i(N)),\quad i=1,2,\dots \] 
with $N=m$. A minor nuisance is that unless the $\xi_i(k)$ are defined based on the \emph{infinite} sequence of coin tosses they are not stationary (this is because in $n$ tosses,   $\xi_i(k)$'s are 0 for $i>n-k$ and thus their distribution is different from that of  $\xi_i(k)$ for $1\le i\le n-k$). We will deal with this issue later and for now we assume that the $\xi_i(k)$ are defined based on an infinite sequence of coin tosses. Then the sequence $(X_{i,1},\dots,X_{i,m})$  is  stationary. Note also that   for every $k\ge1$, $\xi_i(k)$, $i\ge 1$, are $k$-dependent because $\xi_i(k)$ involves positions $(s_i,\dots,s_{i+k})$. Therefore, the random vectors $(\xi_i(1),\dots,\xi_i(m))$, $i\ge1$,  
are $m$--dependent and hence $(X_{i,1},\dots,X_{i,m})$ are $m$--dependent, too. 
Since $\xi_i(k)$ are indicator random variables, it is evident that  $\E|X_{i,k}|^3\le2^3$. Therefore, the asymptotic normality  \eqref{clt} holds and it remains to evaluate the covariance matrix \eqref{cov}.

Since $\xi_{1}(j)$ and $\xi_{l+1}(k)$ involve positions $(s_{1},\dots,s_{j+1})$ and $(s_{l+1},\dots,s_{l+1+k})$, respectively, they are independent if $l>j$, impossible to happen simultaneously if  $l<j$,  and correspond to 
$(s_1,\dots,s_{k+l+1})=HT^{k-1}HT^{l-1}H$  if $l=j$. 
Hence, for $l\ge1$,
\[
\E X_{1,j}X_{l+1,k}=\operatorname{cov}(\xi_{1}(j),\xi_{l+1}(k))=\left\{
\begin{array}{ll}0,&\mbox{if $l>j$}; \\
2^{-j-k-1}-2^{-j-k-2}=2^{-j-k-2},&\mbox{if $l=j$};\\
-2^{-j-k-2},&\mbox{if $l<j$}.
\end{array}\right.
\]
Consequently,
\[
\sum_{l=1}^m\left(\E X_{1,j}X_{l+1,k}+\E X_{l+1,j}X_{1,k}\right)
=-\frac{j-1}{2^{k+j+2}}+\frac1{2^{k+j+2}}-\frac{k-1}{2^{k+j+2}}+\frac1{2^{k+j+2}}
=-\frac{j+k-4}{2^{j+k+2}}.
\]
This holds regardless of whether $j=k$ or not. However, 
\[\E X_{1,j}X_{1,k}=\operatorname{cov}(\xi_1(j),\xi_1(k))=\left\{\begin{array}{ll}
2^{-j-1}-2^{-2(j+1)},&\mbox{if $k=j$};\\
-2^{-j-k-2},&\mbox{if $k\ne j$}.\end{array}\right.\]
Hence,
\[\sigma_{j,j}=\frac1{2^{j+1}}-\frac{2j-3}{2^{2(j+1)}}=\frac1{2^{j+1}}\left(1-\frac{2j-3}{2^{j+1}}\right),\quad \sigma_{j,k}=-\frac{j+k-3}{2^{2(j+1)}},\quad j\ne k.
\]
This proves the central limit theorem in the case of  the infinite number of coin tosses. In Theorem~\ref{thm:degrees,clt} we formally have a triangular array of random vectors
\[
\left(X_{i,1}^{(n)},\dots,X_{i,m}^{(n)}\right)=\left(\xi^{(n)}_i(1)-\E\xi^{(n)}_i(1),\dots,\xi^{(n)}_i(m)-\E\xi^{(n)}_i(m)\right),\quad i=1,2,\dots,n,\quad n\ge 1,
\]
where $\xi_i^{(n)}(k)$ is the indicator of the event $\{(s_i,\dots,s_{i+k})=(HT^{k-1}H)\}$ and $(s_1,\dots,s_n)$ is a sequence of the first $n$ tosses in an infinite sequence of  a coin toss.  
But for $k\le m$, this does not affect the distribution of $(X_{i,k}^{(n)})$ as long as $i\le n-m$. Thus
%But this affects the distribution of $(X_{i,k}^{(n)})$ for $i>n-m$. Thus,
\[
\frac1{\sqrt n}\sum_{i=1}^nX_{i,k}^{(n)}=\frac1{\sqrt n}\sum_{i=1}^nX_{i,k}+\frac1{\sqrt n}\sum_{i=n-m+1}^n(X_{i,k}^{(n)}-X_{i,k}).
\]
Since $m$ is fixed and $X_{i,k}^{(n)}-X_{i,k}$ are uniformly bounded it follows that 
\[\frac1{\sqrt n}\sum_{i=n-m+1}^n(X_{i,k}^{(n)}-X_{i,k})\stackrel P\to0,\quad \mbox{as } n\to\infty\]
and hence Theorem~\ref{thm:degrees,clt} is proved.
\end{proof}

\subsection{Domination number of $T_n$}

A \emph{dominating set} of a graph $G$ is a subset $S$ of the vertex set of $G$ such that each edge of $G$ is incident to a vertex in $S$. The \emph{domination number} of $G$, denoted $\gamma(G)$, is the minimum size of a dominating set, i.e.
\[
\gamma(G) = \min \{\,|S|: S \text{ is a dominating set of } G\}.
\]

Let $T$ be a tree whose vertices are labeled with integers. A smallest dominating set $S$ of $T$ can be found with the following recursive algorithm, similar to the one   given by Cockayne, Goodman, and Hedetniemi~\cite{CGH}.
 As long as a tree contains at least three vertices, first mark (simultaneously) the neighbors of the leaves of the tree and then delete all the edges incident to these marked vertices. 
Repeat this process as long as there is a tree of size at least 3. 
At the end, we end up with trees of size 1 and 2. 
At this point, mark the vertex with smaller value in each tree of size 2. Finally, put the marked vertices into $S$. 

Now suppose that $T$ is a caterpillar, in which case, the dominating set $S$ produced by the algorithm we described above is a subset of the central path.  Let $S_1 \subset S$ be the set of marked vertices %after
produced by 
the first iteration of the algorithm, that is, the set of neighbors of the leaves. Note that $S_1$ contains all the endpoints of the central path as well as all vertices of degree at least $3$, but nothing more.
Let $S_1=\{v_1,\dots,v_k\}$, where the unique path between $v_i$ and $v_{i+1}$ does not contain any other vertex from $S_1$ for $1\le i\le k-1$.
The remaining vertices (if any) of the central path are of degree-2  and they are scattered between the vertices of $S_1$. If there are $n_i$ internal vertices of degree-2 on the path between $v_i$ and $v_{i+1}$, then
\be \label{gammaT}
\gamma(T)= k+\sum_{i=1}^{k-1} \lf n_i/2 \rf.
\ee

In view of this discussion and the adjacency relation given in Lemma~\ref{lem:obs}, to find an asymptotic distribution of  $\gamma(T_n)$, we need to analyze the
structure of the block decomposition of $\bt=\tau_1,\dots,\tau_n$. In particular, $S_1$ differs by at most 2 from the number of vertices of degree at least 3, which is given by $\sum_{k\ge 2} Y_k$ by Lemma~\ref{thm:degrees}. 
The rest of the smallest dominating set described above consists of degree-2 vertices. For this part, (i.e.\ degree-2 vertices in the dominating set) we need to analyze the
maximal runs of blocks of size 1 in the block decomposition of $\bt=\tau_1,\dots,\tau_n$. 
More specifically, letting $B_0=\emptyset$, it follows from Lemma~\ref{lem:obs} that for each pair $i\ge 0$ and $j\ge 1$ such that
\be \label{i,j pairs}
b_i\not = 1, \quad b_{i+1}=\cdots=b_{i+j}=1,  \quad b_{i+j+1}\not = 1,
\ee
where $b_t$ denotes the size of $B_t$, there is a unique set of $j$ vertices, each of degree 2, that connect two vertices of a longest path in $T_n$. Necessarily, a longest path in a caterpillar consists of the central path and two leaves.

In fact, all we need is the information of runs of single-vertex blocks in the block decomposition of $\tau_2,\dots,\tau_{n-1}$. 
Recalling that $W_0'=W_0\setminus \{w_n\}$ and $W_1'=W_1\setminus \{w_1\}$, and using the bijection between the pairs $(W_0',W_1')$ and 0-1 sequences of length $n-2$, it is enough to study the block decomposition of 0-1 sequences. 
Again we will couple the random tree permutation $\tau_1,\dots,\tau_{n}$ with $\bs y$, a random 0-1 sequence of length $n-2$ and we will generate $\bs y$ with the coin-flip algorithm. 
For the runs of blocks of size one, we now need to analyze runs of heads in the random sequence~$\bs s$, where $\bs s$ represents the output of $n-3$ coin flips.

Analogous to $Y_k^*$, we define $Z_k^*$ as the number of strings equal to $TH^{k+1}T$ in $\bs s$. (Note that this string is of size $k+3$ as opposed to $k+1$ in the case of $Y_k^*$.) 
Each such string adds 1 to a run of $k$-consecutive blocks of size one in the block decomposition of $\bt$. 
Note that $Z_k^*$ has the same distribution as $Y_{k+2}^*$.

\begin{theorem}\label{thm:domin}
As $n \to \infty$,
\[
\frac{\gamma(T_n)-\E\gamma(T_n)}{\sqrt n}\stackrel d\to N(0,\sigma^2),
\]
where $\mean{\gamma(T_n)}=n/3+O(1)$ and $\sigma^2=\frac{13}{50}$.
\end{theorem}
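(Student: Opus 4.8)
The plan is to reduce $\gamma(T_n)$ to a linear combination of the pattern counts $Y_k^*$ and $Z_k^*$ and then run the same Hoeffding--Robbins machinery used for Theorem~\ref{thm:degrees,clt}. From \eqref{gammaT} together with the discussion preceding the theorem, the first-iteration set $S_1$ has size $\sum_{k\ge2}Y_k$ up to an additive error of at most $2$ (from the at most two endpoints of the central path that may have degree $2$), while each maximal run of exactly $k$ consecutive size-$1$ blocks contributes one gap carrying $\lf k/2\rf$ interior dominating vertices. Since $Z_k^*$ counts the \emph{interior} such runs and $Y_k^*$ the interior blocks of size $k$, and since the number of boundary blocks and runs is $O(1)$ by \eqref{Y-Y*}, I would first establish
\[
\gamma(T_n)=\sum_{k\ge2}Y_k^*+\sum_{k\ge1}\lf k/2\rf\,Z_k^*+D,
\]
where $D$ is a boundary correction. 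A short argument (the initial and final runs of heads in $\bs s$ have geometric, hence $O_P(1)$, lengths) shows $\E|D|=O(1)$ and $D/\sqrt n\to0$ in probability, so $D$ affects neither the mean up to $O(1)$ nor the limiting law.

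For the mean I would insert \eqref{meanY*} and $\E Z_k^*=\E Y_{k+2}^*=(n-k-5)2^{-k-3}$ and sum the two resulting geometric-type series: $\sum_{k\ge2}\E Y_k^*=n/4+O(1)$ and, using $\sum_{k\ge1}\lf k/2\rf x^k=x^2/((1-x)(1-x^2))$ evaluated at $x=1/2$ (which equals $2/3$), $\sum_{k\ge1}\lf k/2\rf\E Z_k^*=n/12+O(1)$. Adding these gives $\E\gamma(T_n)=n/4+n/12+O(1)=n/3+O(1)$.

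For asymptotic normality I would write $\gamma(T_n)=\sum_i\eta_i+D$ with $\eta_i=\sum_{k\ge2}\xi_i(k)+\sum_{k\ge1}\lf k/2\rf\zeta_i(k)$, where $\xi_i(k)$ is as in the proof of Theorem~\ref{thm:degrees,clt} and $\zeta_i(k)=\ind{(s_i,\dots,s_{i+k+2})=TH^{k+1}T}$. On the infinite toss sequence $(\eta_i)$ is stationary but not $m$-dependent, so I would truncate, setting $\eta_i^{(M)}=\sum_{k=2}^M\xi_i(k)+\sum_{k=1}^M\lf k/2\rf\zeta_i(k)$, which is $(M+2)$-dependent with bounded third moment. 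Theorem~\ref{h-r_thm} then yields $n^{-1/2}\sum_i(\eta_i^{(M)}-\E\eta_i^{(M)})\stackrel d\to N(0,\sigma_M^2)$; since the discarded tails $\sum_{k>M}\xi_i(k)$ and $\sum_{k>M}\lf k/2\rf\zeta_i(k)$ have variance decaying geometrically in $M$, one checks $\sigma_M^2\to\sigma^2$ and $\lim_{M\to\infty}\limsup_n n^{-1}\var(\sum_i(\eta_i-\eta_i^{(M)}))=0$, and a standard converging-together argument upgrades the truncated CLT to $n^{-1/2}(\gamma(T_n)-\E\gamma(T_n))\stackrel d\to N(0,\sigma^2)$.

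The main obstacle is the evaluation of $\sigma^2=\var(\eta_0)+2\sum_{l\ge1}\cov(\eta_0,\eta_l)$. The diagonal term is clean: writing $\eta_0=P_0+Q_0$ with $P_0=\sum_{k\ge2}\xi_0(k)$ and $Q_0=\sum_{k\ge1}\lf k/2\rf\zeta_0(k)$, the two parts are mutually exclusive (one needs $s_0=H$, the other $s_0=T$), so $P_0Q_0=0$ and hence $\E\eta_0^2=\E P_0+\E Q_0^2=\tfrac14+\tfrac18\sum_{k\ge1}\lf k/2\rf^2 2^{-k}=\tfrac14+\tfrac5{36}=\tfrac7{18}$, giving $\var(\eta_0)=7/18-1/9=5/18$. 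The genuine work is the covariance sum: each $\cov(\eta_0,\eta_l)$ splits into four pieces according to whether the patterns at $0$ and at $l$ are of $\xi$- or $\zeta$-type, and for each one must track the overlaps of the strings $HT^{k-1}H$ and $TH^{k+1}T$ (which decay geometrically in $l$), exactly in the spirit of the covariance computation in Theorem~\ref{thm:degrees,clt}. I expect this bookkeeping to yield $\sum_{l\ge1}\cov(\eta_0,\eta_l)=-2/225$, whence $\sigma^2=5/18-4/225=13/50$; carrying out these overlap counts without error, and verifying the summability that justifies exchanging the $n\to\infty$ limit with the covariance series, is where the difficulty lies.
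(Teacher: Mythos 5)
Your plan is essentially the paper's own proof: the same decomposition $\gamma(T_n)=\sum_{k\ge2}Y_k^*+\sum_{k\ge2}\lf k/2\rf Z_k^*+O_P(1)$ (the paper's boundary term is $\sum_{k\ge1}\lc k/2\rc(\ind{A_k}+\ind{B_k})$ plus the $Y_k-Y_k^*$ corrections from~\eqref{Y-Y*}), the same truncation in $k$ followed by the Hoeffding--Robbins CLT for the $(m+2)$-dependent stationary per-position sums, the same handling of the finite-versus-infinite toss sequence, and the same mean computation $n/4+n/12=n/3$. Your diagonal computation is in fact slightly slicker than the paper's: exploiting that $\xi_0(k)$ forces $s_0=H$ while $\zeta_0(k)$ forces $s_0=T$ (and that patterns of different lengths anchored at the same position are mutually exclusive) gives $\eta_0^2=P_0+Q_0^2$ directly, recovering the paper's per-position variance $5/18$ without termwise covariance expansion. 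Your target value for the cross terms is also correct: the paper's eight overlap sums total $-480/27000=-4/225$ for $2\sum_{l\ge1}\cov(\eta_0,\eta_l)$, matching your $-2/225$, and $5/18-4/225=13/50$. (Incidentally, the paper's final display has a typo, $\tfrac58$ where $\tfrac5{18}$ is meant; your arithmetic is the correct one, consistent with the abstract's $0.26n$.)

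The one genuine gap is that the variance constant is asserted rather than established. Since $\sigma^2=13/50$ appears in the statement you were proving, writing ``I expect this bookkeeping to yield $\sum_{l\ge1}\cov(\eta_0,\eta_l)=-2/225$'' is circular: with $\var(\eta_0)=5/18$ in hand, that value is exactly what is forced by the announced answer, so it carries no evidential weight. The paper's proof spends most of its length precisely here, tabulating $\cov(\xi_i(k),\xi_j(l))$, $\cov(\eta_i(k),\eta_j(l))$, and the two mixed types over the overlap regimes $j>i+k(+2)$ (independence), $i<j<i+k(+2)$ (incompatibility, giving $-\E\,\cdot\,\E$), and the concatenation cases $j=i+k$, $j=i+k+1$, $j=i+k+2$ where the patterns share a boundary letter, then summing eight geometric-type series. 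To complete your argument you must carry out this case analysis (or an equivalent generating-function evaluation of $\sum_{l\ge1}\cov(\eta_0,\eta_l)$); everything else in your proposal, including the $\E|D|=O(1)$ boundary control and the converging-together step with $\lim_M\limsup_n n^{-1}\var\bigl(\sum_i(\eta_i-\eta_i^{(M)})\bigr)=0$ justified by the Cauchy--Schwarz bound $\var\bigl(\sum_{k>M}V_k\bigr)\le\bigl(\sum_{k>M}\sqrt{\var V_k}\bigr)^2=O(n2^{-M})$, is sound and matches the paper.
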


\begin{proof}
It follows from~\eqref{gammaT} that
\[
\gamma(T_n)=\sum_{i\ge 2}Y_i+ \sum_{k\ge 2} \lf k/2\rf Z_k^* + \sum_{k\ge 1}\lc k/2\rc ( \ind{A_k}+\ind{B_k}),
\]
where $A_k$ and $B_k$ denote the events that $\bs s$ starts with $H^kT$ and ends with $TH^k$, respectively. (Note that in the last sum we have the ceiling of $k/2$ because if the central path starts (ends) with a degree-2 vertex, this vertex is included in $S$ by the algorithm.)  Since $\pr(A_k)=\pr(B_k)=1/2^{k+1}$, the last sum above is bounded in probability. Using this and~\eqref{Y-Y*}, for the asymptotic distribution of $\gamma(T_n)$, we may only consider the sum
\[
\sum_{k\ge2}\Big(\lfloor k/2\rfloor Z_k^*+Y_k^*\Big).
\]
Let
\[
V_k:=\lfloor k/2\rfloor Z_k^*+Y_k^*-\E(\lfloor k/2\rfloor Z_k^*+Y_k^*), \quad k\ge 2.
\]
For $x\in\Bbb R$, any $\eps>0$ and $m$ to be chosen later we have
\[
\P\Big(\frac1{\sqrt n}\sum_{k\ge 2} V_k\le x\Big)
\le \P\Big(\frac1{\sqrt n}\sum_{k=1}^mV_{k}\le x+\eps,\frac1{\sqrt n} \Big|\sum_{k>m}V_k \Big| \le\eps \Big)
+\P\Big(\frac1{\sqrt n}\Big|\sum_{k>m}V_k\Big|>\eps \Big).
\]
Also,
\begin{align*}
\P\Big(\frac1{\sqrt n}\sum_{k\ge 2} V_k\le x \Big)
&\ge\P\Big(\frac1{\sqrt n}\sum_{k=1}^mV_{k}\le x,\frac1{\sqrt n}\Big|\sum_{k>m}V_k\Big|\le\eps \Big)
\ge \P\Big(\frac1{\sqrt n}\sum_{k=1}^mV_{k}\le x-\eps,\frac1{\sqrt n}\Big|\sum_{k>m}V_k\Big|\le\eps \Big) 	\\
&\quad \ge \P \Big(\frac1{\sqrt n}\Big|\sum_{k\le m}V_k\Big|\le x-\eps \Big)-
\P\Big(\frac1{\sqrt n}\Big|\sum_{k>m}V_k \Big|>\eps \Big).
\end{align*}
Using the inequality $\var(\sum_{k>m}V_k)\le (\sum_{k>m}\sqrt{\var{V_k}})^2$ and the fact that 
$\var(V_k)=O(n/2^k)$ uniformly in $k$, (see~\eqref{varY*}),  
by Chebyshev's inequality we get
\[
\P \Big(\frac1{\sqrt n} \Big| \sum_{k>m}V_k \Big|>\eps \Big)\le\frac1{\eps^2n}\var\Big(\sum_{k>m}V_k \Big)
\le \frac {Cn}{\eps^2n2^{m/2}}=\frac C{\eps^22^{m/2}},
\]
for an absolute constant $C$. Pick $m$ so that $C/2^{m/2}<\eps^3/2$  
and consider 
\[\frac1{\sqrt n}\sum_{k\le m} V_k.\]
We let $\eta_i(k)$ be the indicator of the event $\{(s_i,\dots,s_{i+k+2})=TH^{k+1}T\}$ and write
\[\sum_{k=2}^m\left(\lfloor k/2\rfloor Z_k^*+Y_k^*\right)=\sum_{k=2}^m\sum_{i=1}^n(\lfloor k/2\rfloor\eta_i(k)+\xi_i(k))
=\sum_{i=1}^n\left(\sum_{k=2}^m(\lfloor k/2\rfloor\eta_i(k)+\xi_i(k))\right).
\]
Just as in the proof of Theorem~\ref{thm:degrees,clt}, random variables $\left\{\sum_{k=2}^m\lfloor k/2\rfloor\eta_i(k)+\xi_i(k)\right\}_{i\ge 1}$ 
are $(m+2)$--dependent. They are also stationary (if based on an infinite number of coin tosses).  So, when centered and normalized by $\sqrt n$
 they satisfy the CLT.  Therefore,
\[
\P(N(0,\sigma^2)\le x-\eps)-\eps
\le \liminf_n\P\Big(\frac1{\sqrt n}\sum_{k\ge2}V_k\le x\Big)
\le \limsup_n\P\Big(\frac1{\sqrt n}\sum_{k\ge2}V_k\le x\Big)
\le \P(N(0,\sigma^2)\le x+\eps)+\eps.
\]
The proof of asymptotic normality is completed by letting $\eps\to0$  and dealing with the infinite versus finite sequence of tosses issue in the same way as in the proof of Theorem~\ref{thm:degrees,clt}. We close by computing the expected value and the variance of $\gamma(T_n)$. By~\eqref{meanY*} we already know  that $\E Y^*_k=(n-k-1)/2^{k+1}$ so that 
\[
\sum_{k=2}^\infty\E Y^*_k=\frac n4+  O(1).                    
\]
Since $Z_k^*$ has the same distribution as $Y^*_{k+2}$, we get 
\[
\sum_{k\ge2}\lfloor k/2\rfloor\E Z^*_k= n\sum_{k\ge1}k\left(\frac1{2^{2k+3}}+\frac1{2^{2k+4}}\right)+O(1)
=\frac n{2^3\cdot4}\sum_{k\ge1}\frac k{4^{k-1}}\left(1+\frac12\right)+O(1)
=\frac n{12}+O(1)
\]
and hence $\E \gamma(T_n)= n/3+O(1)$. Furthermore,
\begin{align}\label{varsumVk}
&\var\Big(\sum_{k\ge2}(\lfloor k/2\rfloor Z^*_k+Y^*_k)\Big)=\var\Big(\sum_{i=1}^n\sum_{k\ge2}
(\lfloor k/2\rfloor\eta_i(k)+\xi_i(k))\Big)		\notag 	\\&\quad
=\sum_{i=1}^n\var\Big(\sum_{k\ge2}
(\lfloor k/2\rfloor\eta_i(k)+\xi_i(k))\Big)+2
\sum_{i<j}\operatorname{cov}\Big(\sum_{k\ge2}(\lfloor k/2\rfloor\eta_i(k)+\xi_i(k)),\sum_{k\ge2}(\lfloor k/2\rfloor\eta_j(k)+\xi_j(k))\Big).
\end{align}
Now, for $1\le i\le n-k-2$  
\begin{align*}
&\var\Big(\sum_{k\ge2} (\lfloor k/2\rfloor\eta_i(k)+\xi_i(k))\Big) 
= \sum_{k\ge2}\left\{\lfloor k/2\rfloor^2\var(\eta_i(k))+\var(\xi_i(k))+2\lfloor k/2\rfloor\cov(\eta_i(k),\xi_i(k))\right\}\\&\quad+2\sum_{k<l}\left\{\lfloor k/2\rfloor\lfloor l/2\rfloor\cov(\eta_i(k),\eta_i(l))+\cov(\xi_i(k),\xi_i(l))+\lfloor k/2\rfloor\cov(\eta_i(k),\xi_i(l))+\lfloor l/2\rfloor\cov(\xi_i(k),\eta_i(l))\right\}\\
&= \sum_{k\ge2}\left\{\frac{\lfloor k/2\rfloor^2}{2^{k+3}}\left(1-\frac1{2^{k+3}}\right)+\frac1{2^{k+1}}\left(1-\frac1{2^{k+1}}\right)-
2\frac{\lfloor k/2\rfloor}{2^{k+3}2^{k+1}}\right\}\\&\quad-2\sum_{k<l}\left\{\frac{\lfloor k/2\rfloor\lfloor l/2\rfloor}{2^{k+l+6}}+\frac1{2^{k+l+2}}+\frac{\lfloor k/2\rfloor+\lfloor l/2\rfloor}{2^{l+3+k+1}}\right\} 	
=\sum_{k\ge2}\left(\frac{\lfloor k/2\rfloor^2}{2^{k+3}}+\frac1{2^{k+1}}\right)-\left(\sum_{k\ge2}\frac{\lfloor k/2\rfloor}{2^{k+3}}+\frac1{2^{k+1}}\right)^2\\
&=\frac5{4\cdot3^2}+\frac14-\frac1{3^2}=\frac5{18},
\end{align*}
by the same calculation as in the proof of Theorem~\ref{thm:degrees,clt}.
To compute the second term in~\eqref{varsumVk}, we first use
\begin{align*}&
\operatorname{cov}\Big(\sum_{k\ge2}\big(\lfloor k/2\rfloor\eta_i(k)+\xi_i(k)\big),   \sum_{k\ge2}\big(\lfloor k/2\rfloor\eta_j(k)+\xi_j(k)\big)\Big)	\\&\quad=
\sum_{k\ge2}\Big(\lfloor k/2\rfloor^2\cov(\eta_i(k),\eta_j(k))+\lfloor k/2\rfloor \Big( \cov(\eta_i(k),\xi_j(k))+\cov(\xi_i(k),\eta_j(k))\Big)+\cov(\xi_i(k),\xi_j(k))\Big)	\\&\quad
+2\sum_{k<l}\Big(\lfloor k/2\rfloor\lfloor l/2\rfloor\cov(\eta_i(k),\eta_j(l))+\lfloor k/2\rfloor\cov(\eta_i(k),\xi_j(l))+\lfloor l/2\rfloor\cov(\xi_i(k),\eta_j(l))+\cov(\xi_i(k),\xi_j(l))\Big)
\end{align*}
and then
note that for $i<j$ and $k\le l$ we have 
\begin{align*}\operatorname{cov}(\xi_{i}(k),\xi_{j}(l))&=\left\{\begin{array}{ll}0,&\mbox{if $j>i+k$}; \\
-2^{-k-1}2^{-l-1},&\mbox{if $i<j<i+k$};\\
2^{-k-l-1}-2^{-k-l-2},&\mbox{if $j=i+k$}.\end{array}\right.\\ \operatorname{cov}(\eta_{i}(k),\eta_{j}(l))&=\left\{\begin{array}{ll}0,&\mbox{if $j>i+k+2$}; \\
-2^{-k-3}2^{-l-3},&\mbox{if $i<j<i+k+2$};\\
2^{-k-l-5}-2^{-k-3}2^{-l-3},&\mbox{if $j=i+k+2$}.\end{array}\right.\\
\operatorname{cov}(\eta_{i}(k),\xi_{j}(l))&=\left\{\begin{array}{ll}0,&\mbox{if $j>i+k+2$}; \\
2^{-k-3-l+1}-2^{-k-3}2^{-l-1},&\mbox{if $j=i+k+1$};\\
-2^{-k-3}2^{-l-1},&\mbox{if $i<j<i+k+1$ or $j=i+k+2$},\end{array}\right.\\ \operatorname{cov}(\xi_{i}(k),\eta_{j}(l))&=\left\{\begin{array}{ll}0,&\mbox{if $j>i+k$}; \\
2^{-k+1-l-3}-2^{-k-1}2^{-l-3},&\mbox{if $j=i+k-1$};\\
-2^{-k-1}2^{-l-3},&\mbox{if $i<j<i+k-1$ or $j=i+k$}.\end{array}\right.\ . \end{align*}
Consequently, 
\begin{align*}&
2\sum_{i<j}\sum_{k\ge2}\lfloor k/2\rfloor^2\cov(\eta_i(k),\eta_j(k))=
2\sum_{k\ge2}\left(\lfloor k/2\rfloor^2\sum_i\left(-\frac{k+1}{2^{2(k+3)}}+\frac1{2^{2(k+3)}}\right)
\right)\\&\qquad\sim-\frac{2n}{2^6}\sum_{k\ge2}\frac{k\lfloor k/2\rfloor^2
}{4^{k}}=-\frac n{2^5}\sum_{m\ge1}\left(\frac{m^2(2m)}{4^{2m}}+\frac{m^2(2m+1)}{4^{2m+1}}\right)=-\frac{77}{9000}n.
\end{align*}
Similarly,
\begin{align*}&2\sum_{i<j}\sum_{k\ge2}\lfloor k/2\rfloor\cov(\eta_i(k),\xi_j(k))\sim-\frac n{2^5}\sum_{m\ge1}\frac{m(10m-9)}{4^{2m}}=-\frac 7{1350}n 
\\&2\sum_{i<j}\sum_{k\ge2}\lfloor k/2\rfloor\cov(\xi_i(k),\eta_j(k))\sim-\frac n{2^5}\sum_{m\ge1}\frac{m(10m-19)}{4^{2m}}=\frac{23}{1350}n
\\&2\sum_{i<j}\sum_{k\ge2}
\cov(\xi_i(k),\xi_j(k))\sim-\frac n2\sum_{k\ge2}\frac{k-2}{2^{2k}}=-\frac n{72}
\\&4\sum_{i<j}\sum_{k\le l}\cov(\xi_i(k),\xi_j(l))=4\sum_{k<l}\sum_i\left(-\frac{k-1}{2^{k+l+2}}+\frac1{2^{k+l+2}}\right)
\sim-4n\sum_{k\ge2}\frac{k-2}{2^{k+2}}\sum_{l>k}\frac1{2^l}=-\frac n{36} 
\\&4\sum_{i<j}\sum_{k\le l}\lfloor k/2\rfloor\lfloor l/2\rfloor\cov(\eta_i(k),\eta_j(l))\sim-4n\sum_{k\ge2}\frac{k\lfloor k/2\rfloor}{2^{k+6}}\sum_{l>k}\frac{\lfloor l/2\rfloor}{2^l}=-\frac{43}{1500}n 
\\&4\sum_{i<j}\sum_{k\le l}\lfloor k/2\rfloor\cov(\eta_i(k),\xi_j(l))\sim-\frac n4\sum_{k\ge2}\frac{\lfloor k/2\rfloor(k-2)}{2^k}\sum_{l>k}\frac1{2^l}\sim-\frac7{675}n
\\&4\sum_{i<j}\sum_{k\le l}\lfloor l/2\rfloor\cov(\xi_i(k),\eta_j(l))\sim-\frac n4\sum_{k\ge2}\frac{k-4}{2^k}\sum_{l>k}\frac{\lfloor l/2\rfloor}{2^l} 
\sim\frac{161}{2700}n.
\end{align*}
Combining all of these calculations we finally get
\[
\var\Big(\sum_{k\ge2}\left(\lfloor k/2\rfloor Z_k^*+Y_k^*\right)\Big)\sim n\left(\frac58-\frac{77}{9000}-\frac7{1350}+\frac{23}{1350}-\frac1{72}-\frac1{36}-\frac{43}{1500}-\frac7{675}+\frac{161}{2700}\right)=\frac{13}{50}n
\]
which completes the proof of Theorem~\ref{thm:domin}.
\end{proof}

\section{Concluding Remarks}
Runs of patterns  in $0-1$ sequences have many other connections. For example, they can be used to construct threshold graphs (see e.g. \cite{ChH, MP}) or represent random compositions of integers (see \cite{and} or e.g. \cite{HL,HS} for probabilistic interpretation). Thus, our results are directly applicable to such situations. In particular, Theorem~\ref{thm:degrees,clt} gives the joint asymptotic normality of multiplicities of part sizes in random compositions of an integer $n$, as $n\to\infty$.  Random compositions are often studied in conjunction with samples $(\G_1,\dots,\G_n)$ of iid $\operatorname{Geom}(p)$ random variables. For example,  Grabner, Knopfmacher, and Prodinger~\cite{GKP}  considered runs (of the same values) in such samples and using arguments based 
on generating functions derived, among other things,  the expressions for the expected value, the variance, and the limiting distribution of the number of runs in such samples (see, Propositions~1, Proposition~2, and Theorem~2, respectively in \cite{GKP}). 
We would like to mention that these results are also 
available by probabilistic arguments we used in this paper. Note that the number of runs is given by
%: the number of runs is
\[
R_n=1+\sum_{j=1}^{n-1}I_j,\quad \mbox{where}\quad I_j:=\ind{\G_{j+1}\ne\G_j}.
\]
Since $\G_j$'s are iid and $I_j$ involves only $\G_j$ and $\G_{j+1}$ it follows that $(I_1,\dots,I_{n-1})$ are identically  distributed 1-dependent random variables.
Hence, we can recover the results of Grabner, Knopfmacher, and Prodinger:
\begin{eqnarray*}\E R_n&=&1+(n-1)\P(\G_2\ne\G_1)=1+(n-1)\frac{2q}{1+q}=\frac{2q}{1+q}n+\frac{1-q}{1+q},\\ \var{(R_n)}&=&(n-1)\var{(I_1)}+2(n-2)\operatorname{cov}(I_1,I_2)\\&=&(n-1)\frac{2q}{1+q}\left(1-\frac{2q}{1+q}\right)+2(n-2)\left(\E I_1I_2-\left(\frac{2q}{1+q}\right)^2\right)\\&=&
(n-1)\frac{2q(1-q)}{(1+q)^2}+2(n-2)\frac{q(1-q)^3}{(1+q)^2(1-q^3)}\\
&=&\frac{2q(1-q)^2(2+q^2)}{(1+q)^2(1-q^3)}n-\frac{2q(1-q^2)(3-q+q^2)}{(1+q)^2(1-q^3)},
\end{eqnarray*}
where in the penultimate step we have used the fact that
\begin{eqnarray*}
\E I_1I_2&=&\P(\G_1\ne\G_2\ne\G_3)=\sum_{j=1}^\infty q^{j-1}p(1-q^{j-1}p)^2=1-2\frac{p^2}{1-q^2}+\frac{p^3}{1-q^3}\\&=&\frac{q(1-q)(4q^2+q+1)}{(1+q)(1-q^3)}.
\end{eqnarray*}
Furthermore, $(R_n-\E R_n)/\sqrt n$ is asymptotically normal by  a special case of Theorem~\ref{h-r_thm} (or Theorems~1 or~2 in \cite{HR}).

\section*{Acknowledgement}
Pawe{\l} Hitczenko was partially supported by a Simons Foundation grant \#208766. 
H\"{u}seyin Acan is supported by the National Science Foundation under Award No.~1502650.
 The authors are thankful to anonymous referees for their valuable suggestions.

\end{document}